\newcommand{\bbbc}{\mathbb{C}}
\newcommand{\bbbn}{\mathbb{N}}
\newcommand{\bbbr}{\mathbb{R}}
\newcommand{\Idx}{\mathcal{I}}
\newcommand{\Jdx}{\mathcal{J}}
\newcommand{\ctI}{\mathcal{T}_{\Idx}}
\newcommand{\lfI}{\mathcal{L}_{\Idx}}
\newcommand{\ctII}{\mathcal{T}_{\Idx\times\Idx}}
\newcommand{\lfII}{\mathcal{L}_{\Idx\times\Idx}}
\newcommand{\lfaII}{\mathcal{L}^+_{\Idx\times\Idx}}
\newcommand{\lfiII}{\mathcal{L}^-_{\Idx\times\Idx}}
\newcommand{\diam}{\mathop{\operatorname{diam}}\nolimits}
\newcommand{\dist}{\mathop{\operatorname{dist}}\nolimits}
\newcommand{\supp}{\mathop{\operatorname{supp}}\nolimits}
\newcommand{\treeroot}{\mathop{\operatorname{root}}\nolimits}
\newcommand{\chil}{\mathop{\operatorname{chil}}\nolimits}
\newcommand{\desc}{\mathop{\operatorname{desc}}\nolimits}
\newcommand{\dirchil}{\mathop{\operatorname{dirchil}}\nolimits}
\newcommand{\dirblock}{\mathop{\operatorname{dirblock}}\nolimits}
\newtheorem{theorem}{Theorem}
\newtheorem{definition}[theorem]{Definition}
\newtheorem{lemma}[theorem]{Lemma}
\newtheorem{remark}[theorem]{Remark}
\newenvironment{proof}{\emph{Proof.}}{$\strut$\hfill$\Box$}
\title{Memory-efficient compression of $\mathcal{DH}^2$-matrices
  for high-frequency Helmholtz problems}
\author{Steffen B\"orm and Janne Henningsen
\thanks{Mathematisches Seminar, Christian-Albrechts-Universit\"at Kiel,
  Heinrich-Hecht-Platz 6, Kiel, Germany, EMail \href{mailto:boerm@math.uni-kiel.de}{\texttt{boerm@math.uni-kiel.de}}}}
\begin{document}

\maketitle

\begin{abstract}
Directional interpolation is a fast and efficient compression technique
for high-frequency Helmholtz boundary integral equations, but requires
a very large amount of storage in its original form.
Algebraic recompression can significantly reduce the storage requirements
and speed up the solution process accordingly.
During the recompression process, weight matrices are required to
correctly measure the influence of different basis vectors on the final
result, and for highly accurate approximations, these weight matrices
require more storage than the final compressed matrix.

We present a compression method for the weight matrices and
demonstrate that it introduces only a controllable error to the
overall approximation.
Numerical experiments show that the new method leads to a
significant reduction in storage requirements.
\end{abstract}

\section{Introduction}

We consider boundary element discretizations of the Helmholtz equation
\begin{equation*}
  -\Delta u - \kappa^2 u = 0
\end{equation*}
with the \emph{wave number} $\kappa\in\bbbr$ on a domain
$\Omega\subseteq\bbbr^3$.
Using the fundamental solution
\begin{align}\label{kernel}
  g(x,y) &= \frac{\exp(\iota \kappa \|x-y\|)}{4\pi \|x-y\|} &
  &\text{ for all } x,y\in\bbbr^3,\ x\neq y,
\end{align}
the boundary integral formulation leads to an equation of the form
\begin{align}\label{eq:integral}
  \int_{\partial\Omega} g(x,y)\, \frac{\partial u}{\partial n}(y) \,dy
  &= \frac{1}{2} u(x)
   + \int_{\partial\Omega} \frac{\partial g}{\partial n_y}(x,y) u(y) \,dy &
  &\text{ for all } x\in\partial\Omega
\end{align}
that allows us to compute the Neumann boundary values
$\frac{\partial u}{\partial n}$ on $\partial\Omega$ from the
Dirichlet boundary values.
Once we know both, the solution $u$ can be evaluated anywhere in
the domain $\Omega$.

In order to solve the integral equation (\ref{eq:integral}),
we employ a Galerkin discretization:
the unknown Neumann values are approximated by a boundary
element basis $(\varphi_i)_{i\in\Idx}$ and the Dirichlet values by another,
possibly different, basis $(\psi_j)_{j\in\Jdx}$.
The discretization replaces the integral operators by matrices
$G\in\bbbc^{\Idx\times\Idx}$ and $K\in\bbbc^{\Idx\times\Jdx}$ given by
\begin{align*}
  g_{ij} &:= \int_{\partial\Omega} \varphi_i(x)
             \int_{\partial\Omega} g(x,y) \varphi_j(y) \,dy\,dx &
  &\text{ for all } i,j\in\Idx,\\
  k_{ij} &:= \int_{\partial\Omega} \varphi_i(x)
             \int_{\partial\Omega} \frac{\partial g}{\partial n_y}(x,y)
                                   \psi(y) \,dy\,dx &
  &\text{ for all } i\in\Idx,\ j\in\Jdx.
\end{align*}
Both matrices are densely populated in general, and having to store
them explicitly would severely limit the resolution and therefore the
accuracy of the approximation.

Local low-rank approximations offer an attractive solution:
if the kernel function can be approximated by a tensor product,
the corresponding part of the matrix can be approximated by
a low-rank matrix, and keeping this matrix in factorized form
will significantly reduce the storage requirements.

Directional interpolation \cite{BR91,MESCDA12,BOME15,BO22} offers a
particularly convenient approach: the kernel function is split
into a plane wave and a smooth remainder, and interpolation of
the remainder yields a tensor-product approximation of the
kernel function.
Advantages of this approach include ease of implementation and
very robust convergence properties.
A major disadvantage is the large amount of storage required
by this approximation.

Fortunately, this disadvantage can be overcome by combining the
analytical approximation with an algebraic recompression
\cite{BO17,BOBO18} to significantly reduce the storage requirements
and improve the speed of matrix-vector multiplications at the
expense of some additional work.
In order to guarantee the quality of the recompression, the
algorithm relies on weight matrices that describe how ``important''
different basis vectors are for the final approximation.

If we are considering problems with high wave numbers and
high resolutions, these weight matrices may require far more
storage than the final result of the compression, i.e., we may
run out of storage even if the final result would fit a given
computer system.

To solve this problem, we present an algorithm for replacing
the exact weight matrices by compressed weight matrices.
The key challenge is to ensure that the additional errors introduced
by this procedure can be controlled and do not significantly reduce
the accuracy of the final result of the computation.

The following Section~\ref{se:dh2matrix} introduces the structure of
$\mathcal{DH}^2$-matrices used to represent operators
for high-frequency Helmholtz boundary integral equations.
Section~\ref{se:dh2comp} shows how algebraic compression can be
applied to significantly reduce the storage requirements of
$\mathcal{DH}^2$-matrices.
Section~\ref{se:errors} is focused on deriving error estimates
for the compression.
In Section~\ref{se:weights}, we introduce an algorithm for
approximating the weight matrices while preserving the necessary
accuracy.
Section~\ref{se:experiments} contains numerical results indicating
that the new method preserves the convergence rates of the underlying
Galerkin discretization.

\section{\texorpdfstring{$\mathcal{DH}^2$-matrices}
                                           {DH2-matrices}}
\label{se:dh2matrix}

Integral operators with smooth kernel functions can be handled
efficiently by applying interpolation to the kernel function,
since this gives rise to a low-rank approximation.

The kernel function of the high-frequency Helmholtz equation
is oscillatory and therefore not well-suited for interpolation.
The idea of directional interpolation \cite{BR91,MESCDA12,BOME15}
is to split the kernel function into an oscillatory part that can
be approximated by a plane wave and a smooth part that can be
approximated by interpolation.

\subsection{Directional interpolation}

To illustrate this approach, we approximate the oscillatory
part $\exp(\iota \kappa \|x-y\|)$ for $x\in\tau$, $y\in\sigma$,
where $\tau,\sigma\subseteq\bbbr^3$ are star-shaped subsets
with respect to centers $x_\tau\in\tau$ and $y_\sigma\in\sigma$.
A Taylor expansion of $z := x-y$ around $z_0:=x_\tau-y_\sigma$
yields
\begin{align*}
  \kappa \|z\| &= \kappa \|z_0\|
                  + \kappa \langle \frac{z_0}{\|z_0\|}, z - z_0 \rangle\\
        &\quad + \kappa \int_0^1 (1-t)
                \sin^2 \angle( z-z_0, z_0+t(z-z_0) )
                \frac{\|z-z_0\|^2}{\|z_0 + t(z-z_0)\|} \,dt.
\end{align*}
Inserted into the exponential function, the first two terms on the
right-hand side correspond to a plane wave.
In order to ensure that this plane wave is a reasonably good approximation
of the spherical wave appearing in the kernel function, we have to bound
the integral term.
Using the diameter and distance given by
\begin{align*}
  \diam(\tau) &:= \max\{ \|x_1-x_2\|\ :\ x_1,x_2\in\tau \},\\
  \dist(\tau,\sigma) &:= \min\{ \|x-y\|\ :\ x\in\tau,\ y\in\sigma \},
\end{align*}
the third term is bounded if
\begin{subequations}\label{eq:admissibility}
\begin{equation}\label{eq:admissibility_1}
  \kappa \max\{\diam(\tau)^2, \diam(\sigma)^2\} \leq \eta_3 \dist(\tau,\sigma)
\end{equation}
holds with a suitable parameter $\eta_3\in\bbbr_{>0}$.
In terms of our kernel function, this means that we can approximate
the spherical wave $\exp(\iota \kappa \|x-y\|)$ by the plane wave
travelling in direction $z_0$.

Since $z_0$ depends on $x_\tau$ and $y_\sigma$, we would have to use
different directions for every pair $(\tau,\sigma)$ of subdomains,
and this would make the approximation too expensive.
To keep the number of directions under control, we restrict ourselves
to a fixed set $\mathcal{D}$ of unit vectors and approximate $z_0/\|z_0\|$ by
an element $c\in\mathcal{D}$.
If we can ensure
\begin{equation}\label{eq:admissibility_2}
  \kappa \left\| \frac{x_\tau-y_\sigma}{\|x_\tau-y_\sigma\|}
                 - c \right\| \max\{ \diam(\tau), \diam(\sigma) \}
  \leq \eta_2
\end{equation}
with a parameter $\eta_2\in\bbbr_{>0}$, the spherical wave divided
by the plane wave
\begin{equation*}
  \frac{\exp(\iota \kappa \|x-y\|)}{\exp(\iota \kappa \langle c, x-y \rangle)}
  = \exp(\iota\kappa (\|x-y\| - \langle c, x-y \rangle))
\end{equation*}
will still be sufficiently smooth, and the modified kernel function
\begin{align*}
  g_c(x,y) &:= \frac{\exp(\iota \kappa (\|x-y\| - \langle c,x-y \rangle))}
                    {4\pi \|x-y\|} &
  &\text{ for all } x\in\tau,\ y\in\sigma
\end{align*}
will no longer be oscillatory.
In order to interpolate this function, we also have to keep its
denominator under control.
This can be accomplished by requiring
\begin{equation}\label{eq:admissibility_3}
  \max\{\diam(\tau),\diam(\sigma)\} \leq \eta_1 \dist(\tau,\sigma).
\end{equation}
\end{subequations}
If the three \emph{admissibility conditions} (\ref{eq:admissibility_1}),
(\ref{eq:admissibility_2}), and (\ref{eq:admissibility_3}) hold, standard
tensor interpolation of $g_c$ converges at a robust rate
\cite{BOME15,BO22}.

We choose interpolation points $(\xi_{\tau,\nu})_{\nu=1}^k$ with
corresponding Lagrange polynomials $(\ell_{\tau,\nu})_{\nu=1}^k$ in
the subdomain $\tau$ and interpolation points $(\xi_{\sigma,\mu})_{\mu=1}^k$
with corresponding Lagrange polynomials $(\ell_{\sigma,\mu})_{\mu=1}^k$
in the subdomain $\sigma$ and approximate $g_c$ by the interpolating
polynomial
\begin{equation*}
  \tilde g_{\tau\sigma c}(x,y)
  := \sum_{\nu=1}^k \sum_{\mu=1}^k
     g_c(\xi_{\tau,\nu}, \xi_{\sigma,\mu})
     \ell_{\tau,\nu}(x) \overline{\ell_{\sigma,\mu}(y)}.
\end{equation*}
In order to obtain an approximation of the original kernel function
$g$, we have to multiply $g_c$ by the plane wave
$\exp(\iota \kappa \langle c,x-y \rangle)$ and get
\begin{equation*}
  \tilde g_{\tau\sigma}(x,y)
  = \sum_{\nu=1}^k \sum_{\mu=1}^k
    g_c(\xi_{\tau,\nu}, \xi_{\sigma,\mu})
    \ell_{\tau c,\nu}(x) \overline{\ell_{\sigma c,\mu}(y)}
\end{equation*}
with the modified Lagrange functions
\begin{align*}
  \ell_{\tau c,\nu}(x) &= \exp(\iota \kappa \langle c, x \rangle)
                         \ell_{\tau,\nu}(x), &
  \ell_{\sigma c,\mu}(y) &= \exp(\iota \kappa \langle c, y \rangle)
                         \ell_{\sigma,\mu}(y),
\end{align*}
where we exploit
\begin{align*}
  \overline{\ell_{\sigma c,\mu}(y)}
  &= \overline{\exp(\iota \kappa \langle c, y \rangle)}
     \ell_{\sigma,\mu}(y)
   = \exp(-\iota \kappa \langle c, y \rangle)
     \ell_{\sigma,\mu}(y) &
  &\text{ for all } y\in\bbbr^3,\ \mu\in[1:k].
\end{align*}

\subsection{\texorpdfstring{$\mathcal{DH}^2$-matrices}
                           {DH2-matrices}}

To obtain an approximation of the entire matrix $G$, we have to
partition its index set $\Idx\times\Idx$ into subsets where our
approximation can be used.

%
%
\begin{definition}[Cluster tree]
Let $\mathcal{T}$ be a finite tree, and let each of its nodes
$t\in\mathcal{T}$ be associated with a subset $\hat t\subseteq\Idx$.

$\mathcal{T}$ is called a \emph{cluster tree} for the index set
$\Idx$ if
\begin{itemize}
  \item the root $r=\treeroot(\mathcal{T})$ is associated with
     $\hat r = \Idx$,
  \item for all $t\in\mathcal{T}$ with children, we have
     \begin{equation*}
       \hat t = \bigcup_{t'\in\chil(t)} \hat t'.
     \end{equation*}
  \item for all $t\in\mathcal{T}$, $t_1,t_2\in\chil(t)$, we have
     \begin{equation*}
       t_1\neq t_2 \Rightarrow \hat t_1\cap\hat t_2=\emptyset.
     \end{equation*}
\end{itemize}
A cluster tree for $\Idx$ is usually denoted by $\ctI$.
Its leaves are denoted by $\lfI := \{ t\in\ctI\ :\ \chil(t)=\emptyset \}$.
\end{definition}

A cluster tree provides us with a hierarchy of subsets of the index
set $\Idx$, and its leaves define a disjoint partition of $\Idx$.
In order to define approximations for the matrix $G$, we require
a similar tree structure with subsets of $\Idx\times\Idx$.

%
%
\begin{definition}[Block tree]
Let $\mathcal{T}$ be a finite tree.
It is called a \emph{block tree} for the cluster tree $\ctI$ if
\begin{itemize}
  \item for all $b\in\mathcal{T}$ there are $t,s\in\ctI$
     with $b=(t,s)$,
  \item the root $r=\treeroot(\mathcal{T})$ is given by
     $r=(\treeroot(\ctI),\treeroot(\ctI))$,
  \item for all $b=(t,s)\in\mathcal{T}$ with $\chil(b)\neq\emptyset$
     we have
     \begin{equation*}
       \chil(b) = \chil(t)\times\chil(s).
     \end{equation*}
\end{itemize}
A block tree for $\ctI$ is usually denoted by $\ctII$.
Its leaves are denoted by $\lfII := \{ b\in\ctII\ :\ \chil(b)=\emptyset \}$.
\end{definition}

The definition implies that a block tree $\ctII$ for $\ctI$ is indeed
a cluster tree for the index set $\Idx\times\Idx$, and therefore
the leaves $\lfII$ of a block tree describe a disjoint partition of
$\Idx\times\Idx$, i.e., a decomposition of $G$ into submatrices
$G|_{\hat t\times\hat s}$ for all $b=(t,s)\in\lfII$.

We cannot expect to be able to approximate the submatrices intersecting
the diagonal due to the kernel function's singularity, but we can
use the conditions (\ref{eq:admissibility}) to choose those leaves
of $\ctII$ that can be approximated.

In order to be able to apply (\ref{eq:admissibility}), we need to
take the supports of the basis functions into account.
Since we will be using tensor interpolation, we choose for every
cluster $t\in\ctI$ an axis-parallel \emph{bounding box}
$\tau\subseteq\bbbr^3$ such that
\begin{align*}
  \supp\varphi_i &\subseteq \tau &
  &\text{ for all } i\in\hat t.
\end{align*}
For every cluster $s\in\ctI$, we denote the corresponding bounding box
by $\sigma$.
If we have a block $(t,s)\in\lfII$ with bounding boxes $\tau$ and
$\sigma$ satisfying the \emph{admissibility conditions}
(\ref{eq:admissibility}), we can expect the approximation
\begin{align*}
  \tilde g_{\tau\sigma}(x,y)
  &= \sum_{\nu=1}^k \sum_{\mu=1}^k
       g_c(\xi_{\tau,\nu}, \xi_{\sigma,\mu})
       \ell_{\tau c,\nu}(x) \overline{\ell_{\sigma c,\mu}(y)} &
  &\text{ for all } x\in\tau,\ y\in\sigma
\end{align*}
for a suitably chosen direction $c$ to converge rapidly and therefore
\begin{align}
  g_{ij}
  &\approx \int_{\partial\Omega} \varphi_i(x)
     \int_{\partial\Omega} \tilde g_{\tau\sigma}(x,y)
             \varphi_j(y) \,dy\,dx\notag\\
  &= \sum_{\nu=1}^k \sum_{\mu=1}^k
       \underbrace{\int_{\partial\Omega} \ell_{\tau c,\nu}(x) \varphi_i(x) \,dx
                  }_{=:v_{tc,i\nu}}
       \underbrace{g_c(\xi_{\tau,\nu}, \xi_{\sigma,\mu})
                  }_{=:s_{ts,\nu\mu}}
       \underbrace{\int_{\partial\Omega} \overline{\ell_{\sigma c,\mu}(y) \varphi_j(y)} \,dy
                  }_{=:\bar v_{sc,j\mu}}\label{eq:vsw_inter}\\
  &= (V_{tc} S_{ts} V_{sc}^*)_{ij}
      \qquad\text{ for all } i\in\hat t,\ j\in\hat s.\notag
\end{align}
This means that the submatrices corresponding to the leaves
\begin{equation*}
  \lfaII := \{ (t,s)\in\lfII\ :\ \tau \text{ and } \sigma
                 \text{ satisfy (\ref{eq:admissibility})} \}
\end{equation*}
can be approximated by low-rank matrices in factorized form.

We can satisfy the admissibility condition (\ref{eq:admissibility_2})
only if large clusters are accompanied by a large number of directions
to choose from.

%
%
\begin{definition}[Directions]
Let $\ctI$ be a cluster tree.
For every cluster $t\in\ctI$ we let either $\mathcal{D}_t=\{0\}$
or choose a subset $\mathcal{D}_t\subseteq\bbbr^3$ such that
\begin{align*}
  \|c\| &= 1 &
  &\text{ for every } c\in\mathcal{D}_t.
\end{align*}
The family $(\mathcal{D}_t)_{t\in\ctI}$ is called a \emph{family
of directions} for the cluster tree $\ctI$.
\end{definition}

Allowing $\mathcal{D}_t=\{0\}$ makes algorithms more efficient
for small clusters where (\ref{eq:admissibility_2}) can be fulfilled
by choosing $c=0$.
In this case, the function $\ell_{\tau c,\nu}$ becomes simply the
Lagrange polynomial $\ell_{\tau,\nu}$, and the modified kernel function $g_c$
becomes just the standard kernel function $g$.

Storing the matrices $(V_{tc})_{t\in\ctI,c\in\mathcal{D}_t}$ for all
clusters and all directions would generally require $\mathcal{O}(n^2)$
coefficients, where $n=\#\Idx$ denotes the number of basis functions,
and this would not be an improvement over simply storing the matrix
explicitly.
This problem can be overcome by taking advantage of the fact that we
can approximate $V_{tc}$ in terms of the matrices $V_{t'c'}$ corresponding
to its children:
if we use the same polynomial order for all clusters, we have
\begin{align*}
  \ell_{\tau,\nu}
  &= \sum_{\nu'=1}^k \ell_{\tau,\nu}(\xi_{\tau',\nu'}) \ell_{\tau',\nu'} &
  &\text{ for all } \nu\in[1:k]
\end{align*}
by the identity theorem, and interpolating a slightly modified function
instead yields
\begin{align*}
  \ell_{\tau c,\nu}(x)
  &= \exp(\iota \kappa \langle c, x \rangle) \ell_{\tau,\nu}(x)\\
  &= \exp(\iota \kappa \langle c', x \rangle)
     \exp(\iota \kappa \langle c-c', x \rangle) \ell_{\tau,\nu}(x)\\
  &\approx \exp(\iota \kappa \langle c', x \rangle)
     \sum_{\nu'=1}^k
       \underbrace{\exp(\iota \kappa \langle c-c', \xi_{\tau',\nu'} \rangle)
                    \ell_{\tau,\nu}(\xi_{\tau',\nu'})}_{=:e_{\tau' c,\nu'\nu}}
       \ell_{\tau',\nu'}(x)\\
  &= \sum_{\nu'=1}^k e_{\tau' c,\nu'\nu}\, \ell_{\tau' c',\nu'}(x),
\end{align*}
i.e., we can approximate modified Lagrange polynomials on parent clusters
by modified Lagrange polynomials in their children.
Under moderate conditions, this approximation can be applied repeatedly
without harming the total error too much \cite{BOME15,BO22}, so we can
afford to replace the matrices $V_{\tau c}$ defined in (\ref{eq:vsw_inter}) in
all non-leaf clusters by approximations.

%
%
\begin{definition}[Directional cluster basis]
\label{de:cluster_basis}
Let $\ctI$ be a cluster tree with a family $\mathcal{D}=(\mathcal{D}_t)_{t\in\ctI}$
of directions.
A family $(V_{tc})_{t\in\ctI, c\in\mathcal{D}_t}$ of matrices
$V_{tc}\in\bbbc^{\hat t\times k}$ is called a
\emph{directional cluster basis} for $\ctI$ and $\mathcal{D}$ if
for every $t\in\ctI$ and $t'\in\chil(t)$ there are a direction
$c'=\dirchil(t',c)$ and a matrix $E_{t' c}\in\bbbc^{k\times k}$ with
\begin{equation}\label{eq:transfer}
  V_{tc}|_{\hat t'\times k} = V_{t'c'} E_{t' c}.
\end{equation}
The matrices $E_{t' c}$ are called \emph{transfer matrices}.
Since the matrices $V_{tc}$ have to be stored only for clusters without
children, they are called \emph{leaf matrices}.
\end{definition}

%
%
\begin{definition}[$\mathcal{DH}^2$-matrix]
Let $\ctI$ be a cluster tree with a family $\mathcal{D}$ of directions,
let $V=(V_{tc})_{t\in\ctI, c\in\mathcal{D}_t}$ be a directional cluster basis,
and let $\ctII$ be a block tree.

A matrix $G\in\bbbc^{\Idx\times\Idx}$ is called a
\emph{$\mathcal{DH}^2$-matrix} if for every admissible leaf
$b=(t,s)\in\lfaII$ there are a direction
$c=\dirblock(t,s)\in\mathcal{D}_t\cap\mathcal{D}_s$
and a matrix $S_{ts}\in\bbbc^{k\times k}$ with
\begin{equation}\label{eq:vsw}
  G|_{\hat t\times\hat s} = V_{tc} S_{ts} V_{sc}^*.
\end{equation}
The matrix $S_{ts}$ is called a \emph{coupling matrix} for the block
$b=(t,s)$.
\end{definition}

%
%
\begin{figure}
  \begin{center}
    \includegraphics[width=0.45\textwidth]{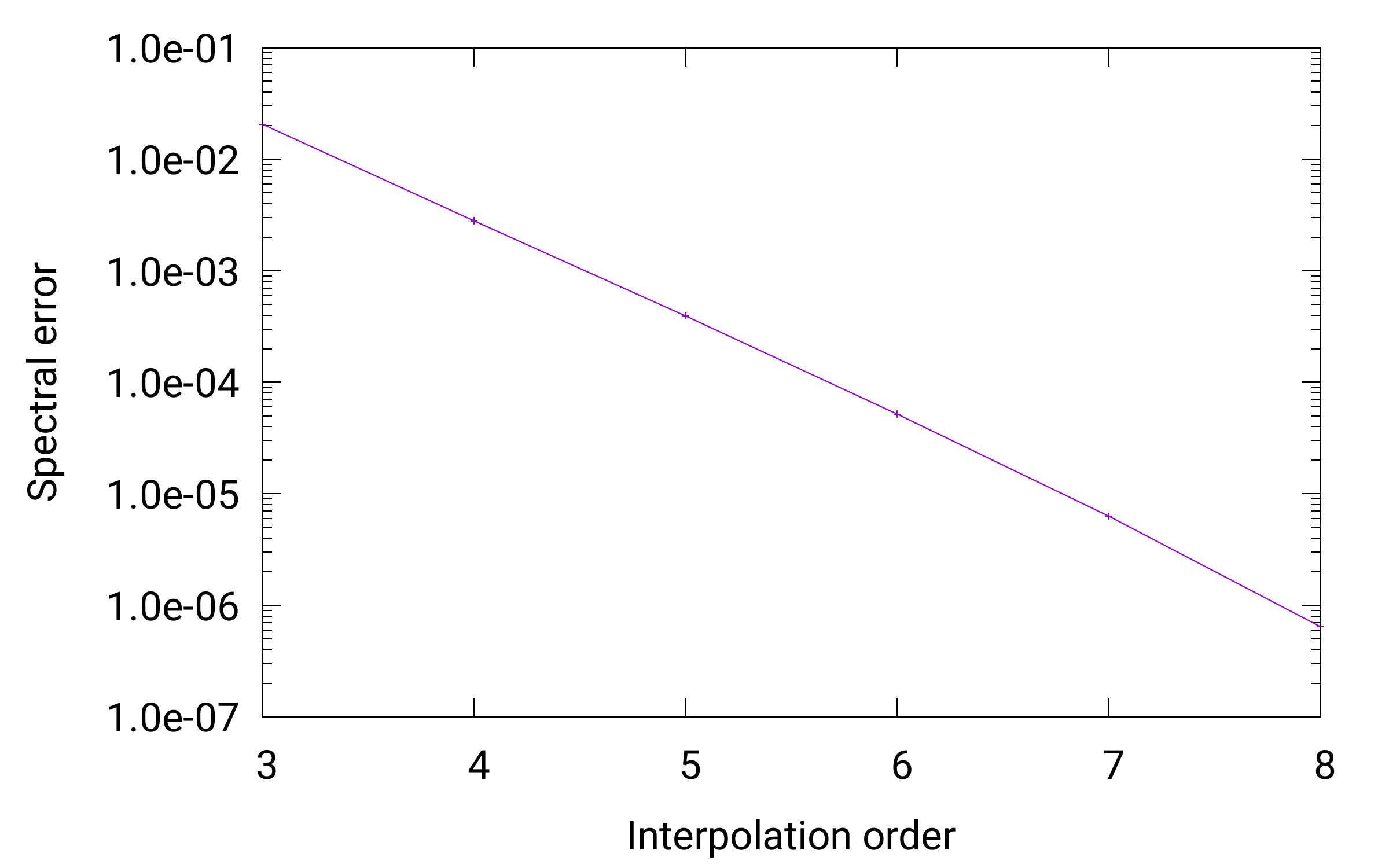}%
    \qquad%
    \includegraphics[width=0.45\textwidth]{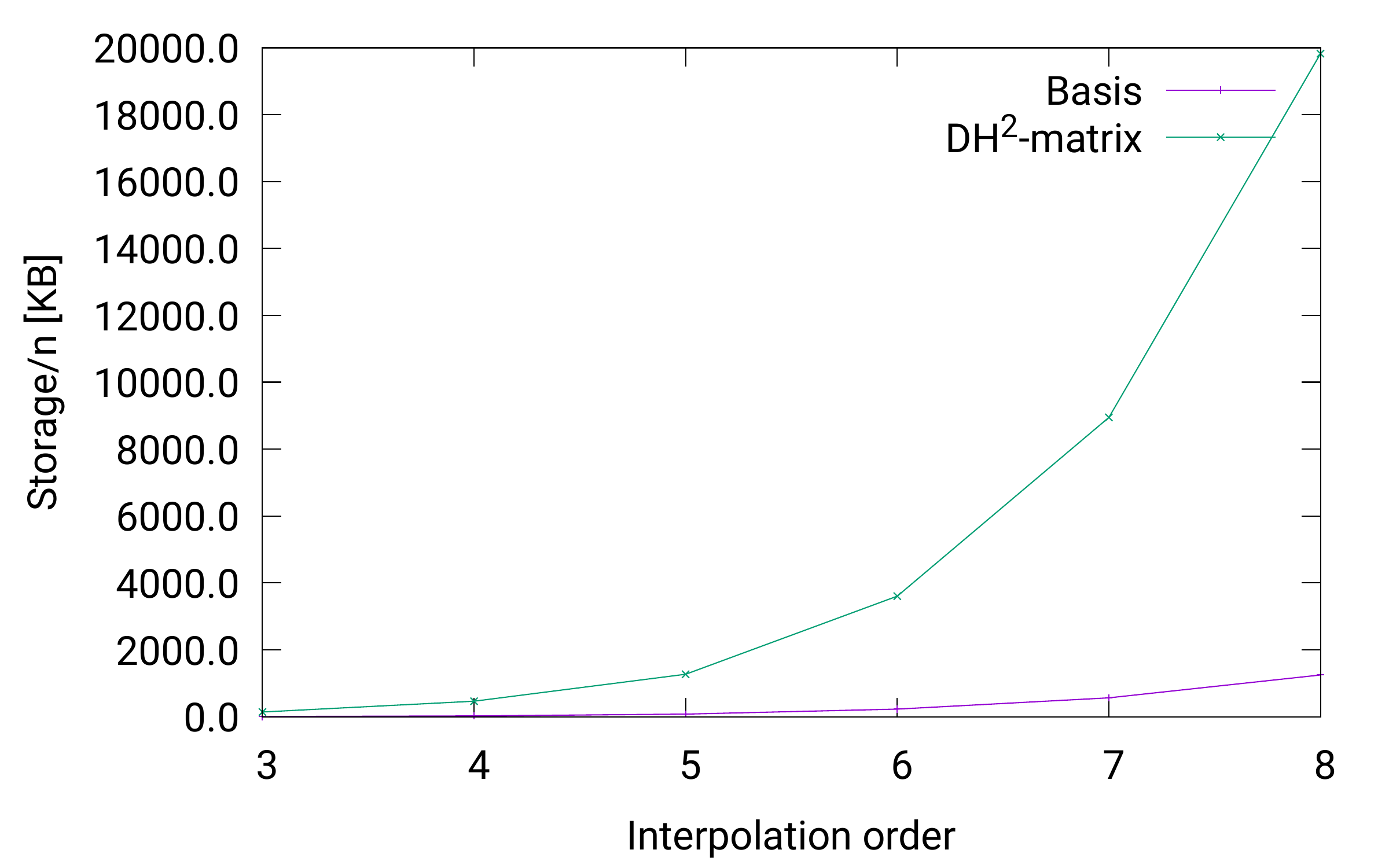}
  \end{center}
 
  \caption{Left: Convergence of directional interpolation with increasing
     order.
     Right: Storage requirements with increasing order}
  \label{fi:directional}
\end{figure}

If we have a $\mathcal{DH}^2$-matrix, all admissible blocks
can be represented by the cluster basis and the coupling matrices.
For the inadmissible leaves $b=(t,s)\in\lfiII$, we store the
corresponding submatrices $G|_{\hat t\times\hat s}$ explicitly.
Under moderate assumptions, these \emph{nearfield matrices}
require only $\mathcal{O}(n k)$ units of storage.

For constant wave numbers $\kappa$, a $\mathcal{DH}^2$-matrix
approximation of $G$ requires only $\mathcal{O}(n k^2)$ units of
storage.
In the high-frequency case, i.e., if $\kappa \sim \sqrt{n}$,
a $\mathcal{DH}^2$-matrix approximation requires only
$\mathcal{O}(n k^3 \log n)$ units of storage \cite{BOME15,BO17}.
The matrix-vector multiplication can be performed in a similar
complexity.

\section{\texorpdfstring{Compression of $\mathcal{DH}^2$-matrices}
                                           {Compression of DH2-matrices}}
\label{se:dh2comp}

Although directional interpolation leads to a robust and fairly
fast algorithm for approximating the matrix $G$, it requires
a very large amount of storage, particularly if we are interested
in a highly accurate approximation:
Figure~\ref{fi:directional} shows that directional interpolation
converges very robustly, but also that interpolation of higher
order requires a very large amount of memory, close to 1~TB
for the eigth order.

Algebraic compression techniques offer an attractive solution:
we use directional interpolation only to provide an intermediate
approximation of $G$ and apply algebraic techniques to reduce
the rank as far as possible.
The resulting re-compression algorithm can be implemented in a
way that avoids having to store the entire intermediate
$\mathcal{DH}^2$-matrix, so that only the final result is
written to memory and very large matrices can be handled at
high accuracies.

We present here a version of the algorithm introduced in
\cite{BOBO18} that will be modified in the following sections.
Our goal is to find an improved cluster basis
$Q=(Q_{tc})_{t\in\ctI,c\in\mathcal{D}_t}$ for the matrix $G$.
In order to avoid redundant information and to ensure numerical
stability, we aim for an \emph{isometric} basis, i.e., we require
\begin{align*}
  Q_{tc}^* Q_{tc} &= I &
  &\text{ for all } t\in\ctI,\ c\in\mathcal{D}_t.
\end{align*}
The best approximation of a matrix block $G|_{\hat t\times\hat s}$
with respect to this basis is given by the orthogonal projection
$Q_{tc} Q_{tc}^* G|_{\hat t\times\hat s}$, and we have to ensure
that all blocks connected to the cluster $t$ and the direction
$c$ are approximated.
We introduce the sets
\begin{align}\label{eq:Rtc_definition}
  \mathcal{R}_{tc} &:= \{ s\in\ctI\ :\ (t,s)\in\lfaII,
                             \ \dirblock(t,s)=c \} &
  &\text{ for all } t\in\ctI,\ c\in\mathcal{D}_t
\end{align}
containing all column clusters connected via an admissible block
to a row cluster $t$ and a given direction $c$ and note that
\begin{align*}
  G|_{\hat t\times\hat s} &\approx Q_{tc} Q_{tc}^* G|_{\hat t\times\hat s} &
  &\text{ for all } s\in\mathcal{R}_{tc}
\end{align*}
is a minimal requirement for our new basis.
But it is not entirely sufficient:
if $t\in\ctI$ has children, Definition~\ref{de:cluster_basis} requires
that $Q_{tc}|_{\hat t'\times k}$ can be expressed in terms of $Q_{t'c'}$
for $t'\in\chil(t)$ and $c'=\dirchil(t',c)$, therefore the basis
$Q_{tc}$ has to be able to approximate all admissible blocks connected
to the ancestors of $t$, as well.
To reflect this requirement, we extend $\mathcal{R}_{tc}$ to
\begin{align}\label{eq:admissible_row}
  \mathcal{R}_{tc}^*
  &:= \begin{cases}
     \mathcal{R}_{tc} &\text{ if } t \text{ is the root of } \ctI,\\
     \mathcal{R}_{tc} \cup
       \bigcup_{\substack{c^+\in\mathcal{D}_{t^+}\\\dirchil(t,c^+)=c}}
       \mathcal{R}_{t^+ c^+}^* &\text{ if } t\in\chil(t^+),\ t^+\in\ctI
  \end{cases} &
  &\text{ for all } t\in\ctI,\ c\in\mathcal{D}_t
\end{align}
by including all admissible blocks connected to the parent, and by
induction to any of its ancestors.
A suitable cluster basis satisfies
\begin{align*}
  G|_{\hat t\times\hat s} &\approx Q_{tc} Q_{tc}^* G|_{\hat t\times\hat s} &
  &\text{ for all } s\in\mathcal{R}_{tc}^*,\ t\in\ctI,\ c\in\mathcal{D}_t.
\end{align*}
By combining all of these submatrices in a large matrix
\begin{align*}
  G_{tc} &:= G|_{\hat t\times\mathcal{C}_{tc}}, &
  \mathcal{C}_{tc} &:= \bigcup\{ \hat s\ :\ s\in\mathcal{R}_{tc}^* \},
\end{align*}
we obtain the equivalent formulation
\begin{align*}
  G_{tc} &\approx Q_{tc} Q_{tc}^* G_{tc} &
  &\text{ for all } t\in\ctI,\ c\in\mathcal{D}_t,
\end{align*}
and the singular value decompositions of $G_{tc}$ can be used to determine
optimal isometric matrices $Q_{tc}$ with this property.
The resulting algorithm, investigated in \cite{BO17}, has quadratic
complexity, since it does not take the special structure of $G$ into
account.

If $G$ is already approximated by a $\mathcal{DH}^2$-matrix, e.g.,
via directional interpolation, we can make this algorithm considerably
more efficient.
We start by considering the root $t$ of $\ctI$.
Let $c\in\mathcal{D}_t$.
Since $G$ is a $\mathcal{DH}^2$-matrix, we have
\begin{align*}
  G|_{\hat t\times\hat s} &= V_{tc} S_{ts} V_{sc}^* &
  &\text{ for all } s\in\mathcal{R}_{tc},
\end{align*}
and enumerating $\mathcal{R}_{tc} = \{ s_1, \ldots, s_m \}$ yields
\begin{equation*}
  G_{tc} = V_{tc} \begin{pmatrix}
     S_{ts_1} V_{s_1c}^* & \ldots & S_{ts_m} V_{s_mc}^*
  \end{pmatrix}.
\end{equation*}
The right factor has only $k$ rows, and we can use Householder
transformations to condense it into a small $k\times k$ matrix
without changing the singular values and left singular vectors of
$G_{tc}$.
Using the transformations directly, however, is too computationally
expensive, so we are looking for way to avoid it.

%
%
\begin{definition}[Basis weights]
\label{de:basis_weights}
A family $(R_{sc})_{s\in\ctI, c\in\mathcal{D}_s}$ of matrices is called
a family of \emph{basis weights} for the basis $(V_{sc})_{s\in\ctI,\
  c\in\mathcal{D}_s}$ if for every $s\in\ctI$ and $c\in\mathcal{D}_s$
there is an isometric matrix $Q_{sc}$ with
\begin{equation*}
  V_{sc} = Q_{sc} R_{sc}
\end{equation*}
and the matrices $R_{sc}$ have each $k$ columns and at most $k$ rows.
\end{definition}

If we have basis weights at our disposal, we obtain
\begin{equation*}
   G_{tc} = V_{tc}
     \begin{pmatrix}
       S_{ts_1} R_{s_1c}^* & \ldots & S_{ts_m} R_{s_mc}^*
     \end{pmatrix}
     \begin{pmatrix}
       Q_{s_1c}^* & & \\
       & \ddots & \\
       & & Q_{s_mc}^*
     \end{pmatrix},
\end{equation*}
and since the multiplication by an adjoint isometric matrix from the
left does not change the singular values or left singular vectors,
we can replace $G_{tc}$ with
\begin{equation*}
   V_{tc} \begin{pmatrix}
            S_{ts_1} R_{s_1c}^* & \ldots & S_{ts_m} R_{s_mc}^*
          \end{pmatrix}.
\end{equation*}
We can even go one step further and compute a thin Householder
factorization of the right factor's adjoint
\begin{equation*}
  \widehat{P}_{tc} Z_{tc}
  = \begin{pmatrix}
       R_{s_1c} S_{ts_1}^*\\
       \vdots\\
        R_{s_mc} S_{ts_m}^*
     \end{pmatrix}
\end{equation*}
with an isometric matrix $\widehat{P}_{tc}$ and a matrix $Z_{tc}$ that has
only $k$ columns and not more than $k$ rows.
If we set
\begin{equation*}
  P_{tc} := \begin{pmatrix}
    Q_{s_1c} & & \\
    & \ddots & \\
    & & Q_{s_mc}
  \end{pmatrix} \widehat{P}_{tc},
\end{equation*}
we obtain
\begin{equation*}
  G_{tc} = V_{tc} Z_{tc}^* P_{tc}^*
\end{equation*}
and can drop the rightmost adjoint isometric matrix to work just with the thin
matrix $V_{tc} Z_{tc}^*$ that has only at most $k$ columns.

So far, we have only considered the root of the cluster tree.
If $t\in\ctI$ is a non-root cluster, it has a parent $t^+\in\ctI$
and our definition (\ref{eq:admissible_row}) yields
\begin{equation*}
  \mathcal{R}_{tc}^* = \mathcal{R}_{tc} \cup
       \bigcup_{\substack{c^+\in\mathcal{D}_{t^+}\\ \dirchil(t,c^+)=c}}
       \mathcal{R}_{t^+c^+}^*.
\end{equation*}
Let $c^+\in\mathcal{D}_{t^+}$ with $\dirchil(t,c^+)=c$.
If we assume that $Z_{t^+ c^+}$ has already been computed, we have
\begin{equation*}
  G_{tc}|_{\hat t\times\mathcal{C}_{t^+ c^+}}
  = (G_{t^+ c^+})|_{\hat t\times\mathcal{C}_{t^+ c^+}}
  = (V_{t^+ c^+} Z_{t^+ c^+}^* P_{t^+ c^+}^*
    )|_{\hat t\times\mathcal{C}_{t^+ c^+}}
  = V_{tc} E_{t c^+} Z_{t^+ c^+}^* P_{t^+ c^+}^*.
\end{equation*}
To apply this procedure to all directions $c^+$, we enumerate them as
\begin{equation*}
  \{ c_1^+, \ldots, c_\ell^+ \}
  = \{ c^+\in\mathcal{D}_{t^+}\ :\ \dirchil(t,c^+)=c \}
\end{equation*}
and the admissible blocks again as $\mathcal{R}_{tc} = \{ s_1, \ldots, s_m \}$
to get
\begin{align*}
  G_{tc}
  &= V_{tc} \begin{pmatrix}
      S_{ts_1} V_{s_1c}^* & \ldots & S_{ts_m} V_{s_mc}^* &
      E_{tc^+_1} Z_{t^+c^+_1}^* P_{t^+c^+_1}^* & \ldots
      & E_{tc^+_\ell} Z_{t^+c^+_\ell}^* P_{t^+c^+_\ell}^*
    \end{pmatrix}\\
  &= V_{tc} \begin{pmatrix}
      S_{ts_1} R_{s_1c}^* Q_{s_1c}^* & \ldots & S_{ts_m} R_{s_mc}^* Q_{S_mc}^* &
      E_{tc^+_1} Z_{t^+c^+_1}^* P_{t^+c^+_1}^* & \ldots
      & E_{tc^+_\ell} Z_{t^+c^+_\ell}^* P_{t^+c^+_\ell}^*
    \end{pmatrix}.
\end{align*}
The rightmost factors are again isometric, and we can once more compute a
thin Householder factorization
\begin{equation*}
  \widehat{P}_{tc} Z_{tc}
  = \begin{pmatrix}
       R_{s_1c} S_{ts_1}^* \\
       \vdots \\
       R_{s_mc} S_{ts_m}^* \\
       Z_{t^+c^+_1} E_{tc^+_1}^* \\
       \vdots \\
       Z_{t^+c^+_\ell} E_{tc^+_\ell}^*
    \end{pmatrix}
  \quad\text{and set}\quad
  P_{tc} := \begin{pmatrix}
      Q_{s_1c} & & & & & \\
      & \ddots & & & & \\
      & & Q_{s_mc} & & & \\
      & & & P_{t^+c^+_1} & & \\
      & & & & \ddots & \\
      & & & & & P_{t^+c^+_\ell}
    \end{pmatrix} \widehat{P}_{tc}.
\end{equation*}
to obtain
\begin{equation*}
  G_{tc} = V_{tc} Z_{tc}^* P_{tc}^*.
\end{equation*}
Since the isometric matrices $P_{tc}$ do not influence the range
of $G_{tc}$, we do not have to compute them, we only need the
weight matrices $Z_{tc}$.

%
%
\begin{definition}[Total weights]
A family $(Z_{tc})_{t\in\ctI, c\in\mathcal{D}_t}$ of matrices is called
a family of \emph{total weights} for the $\mathcal{DH}^2$-matrix $G$
if for every $t\in\ctI$ and $c\in\mathcal{D}_t$ there is an
isometric matrix $P_{tc}$ with
\begin{equation}\label{eq:Gtc_factorization}
  G_{tc} = V_{tc} Z_{tc}^* P_{tc}^*
\end{equation}
and the matrices $Z_{tc}$ have each $k$ columns and at most $k$ rows.
\end{definition}

%
%
\begin{remark}[Symmetric total weights]
In the original approximation constructed by directional interpolation,
the same cluster basis is used for rows and columns, since we have
$G|_{\hat t\times\hat s} = V_{tc} S_{ts} V_{sc}^*$ for all admissible
blocks $b=(t,s)\in\lfaII$.

Since the matrix $G$ is not symmetric, this property no longer holds
for the adaptively constructed basis $(Q_{tc})_{t\in\ctI,c\in\mathcal{D}_t}$
and we would have to construct a separate basis for the columns by applying
the procedure to the adjoint matrix $G^*$.

A possible alternative is to extend the total weight matrices to handle
$G$ and $G^*$ simultaneously: for every $s\in\mathcal{R}_{tc}$, we
include not only $R_{sc} S_{ts}^*$ in the construction of the weight
$Z_{tc}$, but also $R_{sc} S_{st}$.
This will give us an adaptive cluster basis that can be used for rows and
columns, just like the original.
Since the matrices appearing in the Householder factorization are now
twice as large, the algorithm will take almost twice as long to complete
and the adaptively chosen ranks may increase.
\end{remark}

%
%
\begin{figure}
  \begin{quotation}
    \begin{tabbing}
      \textbf{procedure} basis\_weights($s$);\\
      \textbf{begin}\\
      \quad\= \textbf{if} $\chil(s)=\emptyset$ \textbf{then}\\
      \> \quad\= \textbf{for} $c\in\mathcal{D}_s$ \textbf{do}\\
      \> \> \quad\= Find a thin Householder decomposition
                        $V_{sc} = Q_{sc} R_{sc}$\\
      \> \textbf{else begin}\\
      \> \> \textbf{for} $s'\in\chil(s)$ \textbf{do}\\
      \> \> \> basis\_weights($s'$);\\
      \> \> \textbf{for} $c\in\mathcal{D}_s$ \textbf{do begin}\\
      \> \> \> Set up $\widehat{V}_{sc}$ as in (\ref{eq:Vhat_sc});\\
      \> \> \> Find a thin Householder decomposition
                        $\widehat{V}_{sc} = \widehat{Q}_{sc} R_{sc}$\\
      \> \> \textbf{end}\\
      \> \textbf{end}\\
      \textbf{end}
    \end{tabbing}
  \end{quotation}
  \caption{Construction of the basis weights $R_{sc}$}
  \label{fi:basis_weights}
\end{figure}

We can compute the total weights efficiently by this procedure
as long as we have the basis weights $(R_{sc})_{s\in\ctI,c\in\mathcal{D}_s}$
at our disposal.
These weights can be computed efficiently by taking advantage of
their nested structure:
if $s\in\ctI$ is a leaf, we compute the thin Householder factorization
\begin{equation*}
  V_{sc} = Q_{sc} R_{sc}
\end{equation*}
with an isometric matrix $Q_{sc}$ and a matrix $R_{sc}$ with
$k$ columns and at most $k$ rows.

If $s\in\ctI$ has children, we first compute the basis weights
for all children $\chil(s)=\{s_1,\ldots,s_\ell\}$ by recursion and let
\begin{equation}\label{eq:Vhat_sc}
  \widehat{V}_{sc} = \begin{pmatrix}
    R_{s_1 c_1} E_{s_1 c}\\
    \vdots\\
    R_{s_\ell c_\ell} E_{s_\ell c}
  \end{pmatrix},
\end{equation}
where $c_i = \dirchil(s_i,c)$ for all $i\in[1:\ell]$.
We compute the thin Householder factorization
\begin{equation*}
  \widehat{V}_{sc} = \widehat{Q}_{sc} R_{sc}
\end{equation*}
and find
\begin{equation*}
  V_{sc} = \begin{pmatrix}
             V_{s_1 c_1} E_{s_1 c}\\
             \vdots\\
             V_{s_\ell c_\ell} E_{s_\ell c}
           \end{pmatrix}
  = \begin{pmatrix}
      Q_{s_1 c_1} & & \\
      & \ddots & \\
      & & Q_{s_\ell c_\ell}
    \end{pmatrix} \widehat{V}_{sc}
  = \underbrace{\begin{pmatrix}
    Q_{s_1 c_1} & & \\
    & \ddots & \\
    & & Q_{s_\ell c_\ell}
  \end{pmatrix} \widehat{Q}_{sc}}_{=:Q_{sc}} R_{sc}.
\end{equation*}
The matrix $Q_{sc}$ is the product of two isometric matrices and
therefore itself isometric.
We can see that we can compute the basis weight matrices
$R_{sc}$ using only $\mathcal{O}(k^3)$ operations per $s\in\ctI$
and $c\in\mathcal{D}_s$ as long as we are not interested in $Q_{sc}$.
The algorithm is summarized in Figure~\ref{fi:basis_weights}.

Once the basis weights and total weights have been computed, we
can construct the improved cluster basis $Q_{tc}$.

%
%
\begin{figure}
  \begin{quotation}
    \begin{tabbing}
      \textbf{procedure} build\_basis($t$);\\
      \textbf{begin}\\
      \quad\= \textbf{if} $\chil(t)=\emptyset$ \textbf{then}\\
      \> \quad\= \textbf{for} $c\in\mathcal{D}_s$ \textbf{do begin}\\
      \> \> \quad\= Use a thin Householder factorization to get $Z_{tc}$;\\
      \> \> \> Compute the singular value decomposition
                     $V_{tc} Z_{tc}^* = U \Sigma V^*$;\\
      \> \> \> Choose a rank $k_{tc}$, shrink $U$ to its first $k_{tc}$ columns;\\
      \> \> \> $Q_{tc} \gets U$; \quad $T_{tc} \gets Q_{tc}^* V_{tc}$\\
      \> \> \textbf{end}\\
      \> \textbf{else begin}\\
      \> \> \textbf{for} $s'\in\chil(s)$ \textbf{do}\\
      \> \> \> build\_basis($s'$);\\
      \> \> \textbf{for} $c\in\mathcal{D}_s$ \textbf{do begin}\\
      \> \> \> Use a thin Householder factorization to get $Z_{tc}$;\\
      \> \> \> Set up $\widehat{V}_{tc}$ as in (\ref{eq:Vhat_tc});\\
      \> \> \> Compute the singular value decomposition
                      $\widehat{V}_{tc} Z_{tc}^* = \widehat{U} \Sigma V^*$;\\
      \> \> \> Choose a rank $k_{tc}$, shrink $\widehat{U}$ to its first
                 $k_{tc}$ columns;\\
      \> \> \> $\widehat{Q}_{tc} \gets \widehat{U}$;
               \quad $T_{tc} \gets \widehat{Q}_{rc}^* \widehat{V}_{tc}$\\
      \> \> \textbf{end}\\
      \> \textbf{end}\\
      \textbf{end}
    \end{tabbing}
  \end{quotation}
  \caption{Construction of an adaptive cluster basis}
  \label{fi:build_basis}
\end{figure}

If $t\in\ctI$ is a leaf, we make use of (\ref{eq:Gtc_factorization})
to get
\begin{equation*}
  G_{tc} = V_{tc} Z_{tc} P_{tc}^*,
\end{equation*}
and we can again drop the isometric matrix $P_{tc}$ and only have to
find the singular value decomposition of $V_{tc} Z_{tc}^*$, choose a
suitable rank $k_{tc}\in\bbbn_0$ and use the first $k_{tc}$ left singular
vectors as columns of the matrix $Q_{tc}$.
We also prepare the matrix $T_{tc} := Q_{tc}^* V_{tc}$ describing the
change of basis from $V_{tc}$ to $Q_{tc}$.

If $t\in\ctI$ is not a leaf, we first construct the basis for
all children $\{t_1,\ldots,t_\ell\}=\chil(t)$.
Since the parent can only approximate what has been kept by its children,
we can switch to the orthogonal projection
\begin{equation*}
  \widehat{G}_{tc} := \begin{pmatrix}
    Q_{t_1 c_1}^* & & \\
    & \ddots & \\
    & & Q_{t_\ell c_\ell}^*
  \end{pmatrix} G_{tc}
  = \begin{pmatrix}
    Q_{t_1 c_1}^* G|_{\hat t_1\times\mathcal{C}_{tc}}\\
    \vdots\\
    Q_{t_\ell c_\ell}^* G|_{\hat t_\ell\times\mathcal{C}_{tc}}
  \end{pmatrix}
\end{equation*}
of $G_{tc}$ with  $c_i=\dirchil(t_i,c)$ for all $i\in[1:\ell]$.
Using again (\ref{eq:Gtc_factorization}), we find
\begin{equation*}
  \widehat{G}_{tc} = \widehat{V}_{tc} Z_{tc} P_{tc}^*,
\end{equation*}
with the projection of $V_{tc}$ into the children's bases
\begin{equation}\label{eq:Vhat_tc}
  \widehat{V}_{tc} := \begin{pmatrix}
    T_{t_1,c_1} E_{t_1 c}\\
    \vdots\\
    T_{t_\ell,c_\ell} E_{t_\ell c}
  \end{pmatrix}
\end{equation}
that can be easily computed using the transfer matrices and the
basis-change matrices.
Once again we can drop the isometric factor $P_{tc}$ and only have
to compute the singular value decomposition of $\widehat{V}_{tc} Z_{tc}^*$,
choose again a suitable rank $k_{tc}\in\bbbn_0$ and use the first
$k_{tc}$ left singular vectors as columns of a matrix $\widehat{Q}_{tc}$.
Using
\begin{equation*}
  Q_{tc} := \begin{pmatrix}
               Q_{t_1,c_1} & & \\
               & \ddots & \\
               & & Q_{t_\ell,c_\ell}
            \end{pmatrix} \widehat{Q}_{tc}
\end{equation*}
gives us the new cluster basis, where the transfer matrices can be
extracted from $\widehat{Q}_{tc}$.
Again it is a good idea to prepare the basis-change matrix
$T_{tc} := Q_{tc}^* V_{tc} = \widehat{Q}_{tc}^* \widehat{V}_{tc}$
for the next steps of the recursion.

Under standard assumptions, the entire construction can be
performed in $\mathcal{O}(n k^2)$ operations for constant wave numbers
and $\mathcal{O}(n k^3 \log n)$ operations in the high-frequency case
\cite{BOBO18}.
The algorithm is summarized in Figure~\ref{fi:build_basis}.
It is important to note that the total weight matrices $Z_{tc}$
can be constructed and discarded during the recursive algorithm,
they do not have to be kept in storage permanently.
This is in contrast to the basis weight matrices $R_{sc}$ that
may appear at any time during the recursion and therefore are
kept in storage during the entire run of the algorithm.

%
%
\begin{figure}
  \begin{center}
    \includegraphics[width=0.45\textwidth]{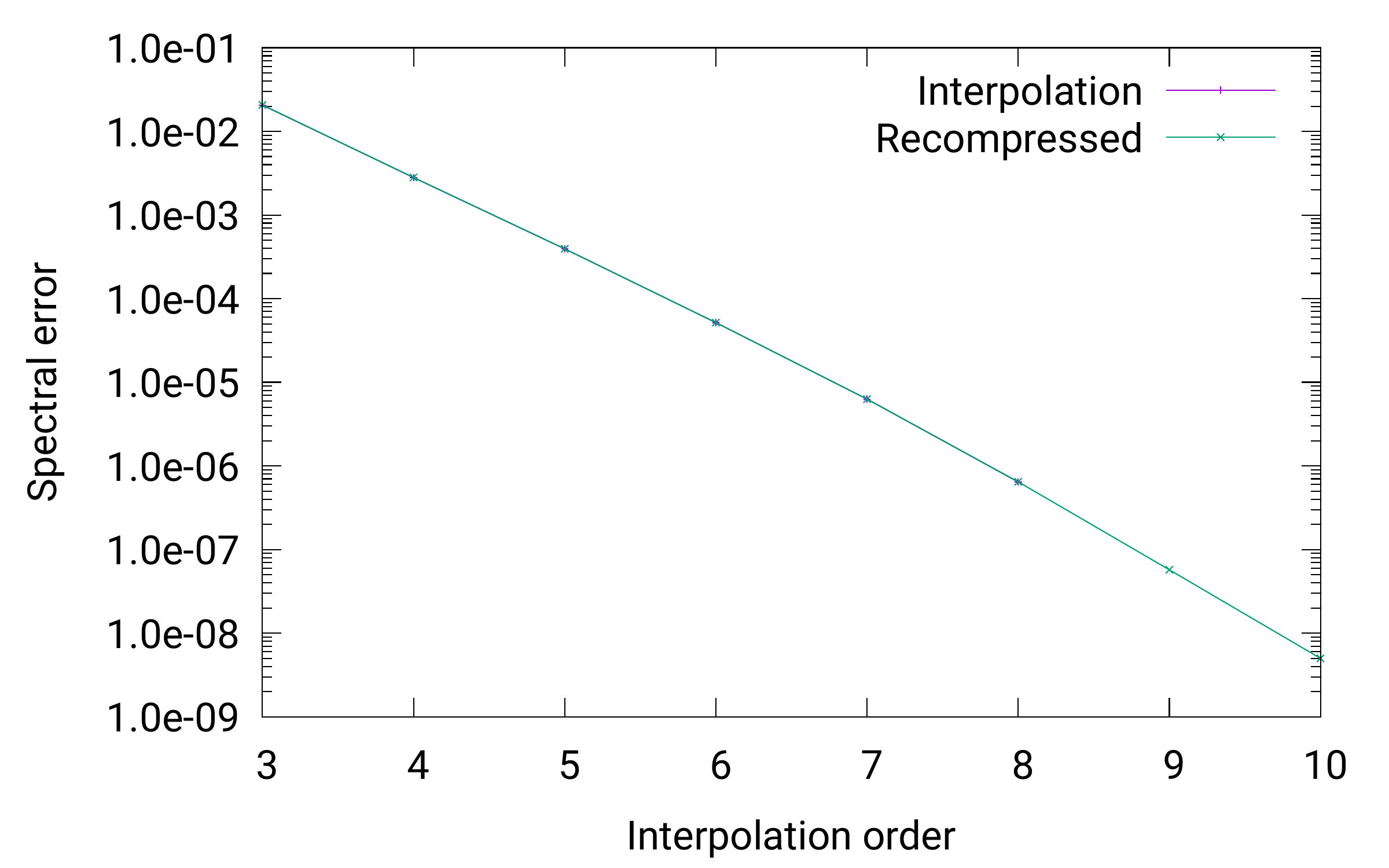}%
    \qquad%
    \includegraphics[width=0.45\textwidth]{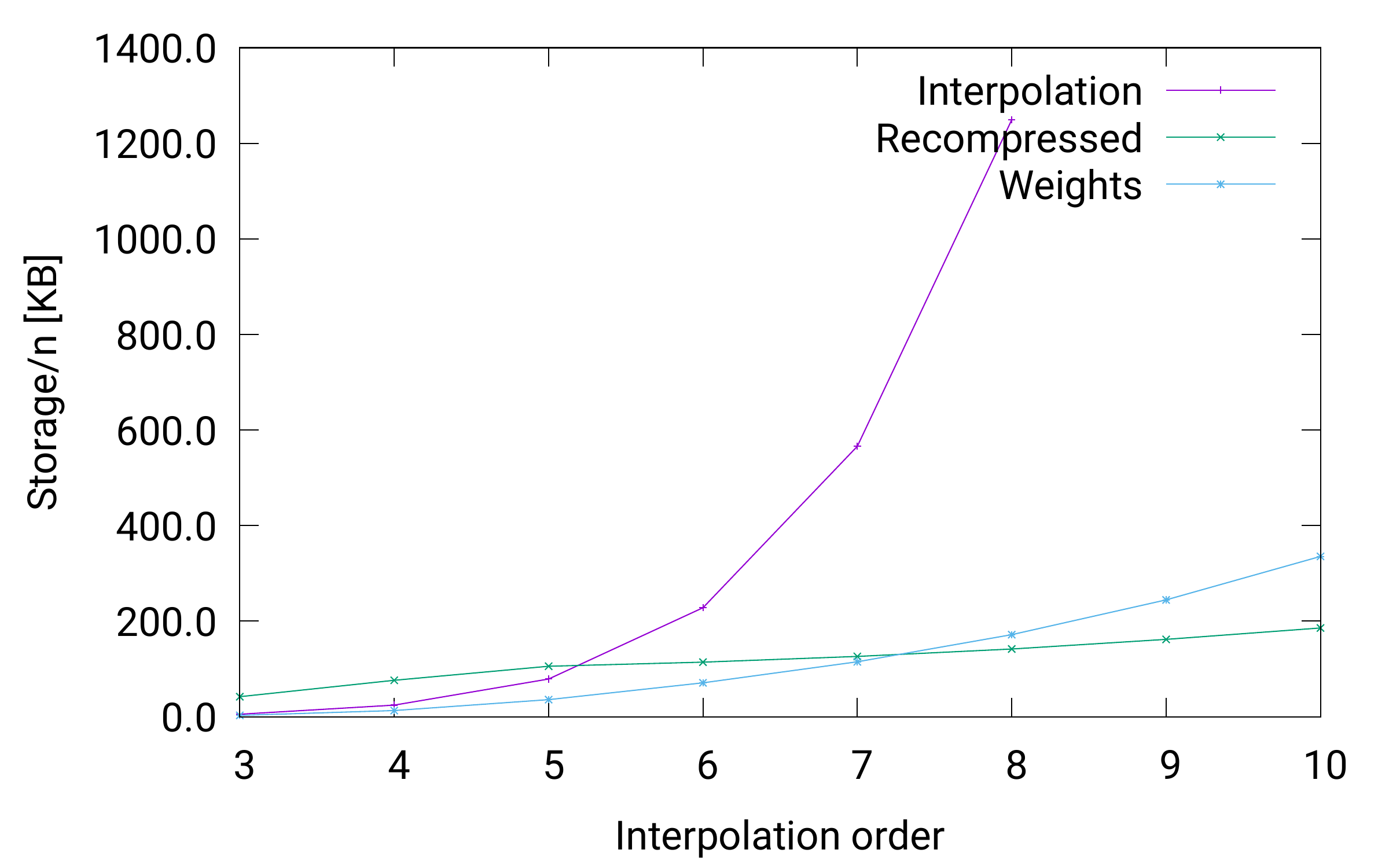}
  \end{center}
 
  \caption{Left: Convergence of recompressed interpolation with
    increasing order.
    Right: Storage requirements of original interpolation and
    recompression}
  \label{fi:recomp}
\end{figure}

Figure~\ref{fi:recomp} shows that recompression --- applied with
suitable parameters --- leaves the approximation quality unchanged
and drastically reduces the storage requirements.
The blue curve corresponds to the storage needed for the basis
weights, and we can see that it grows beyond the storage for the
entire recompressed $\mathcal{DH}^2$-matrix if higher polynomial
orders are used.
This is not acceptable if we want to apply the method to high-frequency
problems at high accuracies, so we will now work to reduce the
storage requirements for these weights without harming the
convergence of the overall method.

\section{Error control}
\label{se:errors}

In order to preserve the convergence properties, we have to investigate
how our algorithm reacts to perturbations.

\subsection{Error decomposition}
We consider an admissible block $(t,s)\in\lfaII$ with $c=\dirblock(t,s)$
that is approximated by our algorithm by
\begin{equation*}
  Q_{tc} Q_{tc}^* G|_{\hat t\times\hat s}.
\end{equation*}
If $t$ is a leaf, the approximation error is given by
\begin{equation*}
  G|_{\hat t\times\hat s} - Q_{tc} Q_{tc}^* G|_{\hat t\times\hat s}.
\end{equation*}
If $t$ is not a leaf, there are children $\{t_1,\ldots,t_\ell\}=\chil(t)$
with directions $c_i=\dirchil(t_i,c)$, $i\in[1:\ell]$, and an isometric
matrix $\widehat{Q}_{tc}$ such that
\begin{equation*}
  Q_{tc} = \underbrace{\begin{pmatrix}
    Q_{t_1 c_1} & & \\
    & \ddots & \\
    & & Q_{t_i c_i}
  \end{pmatrix}}_{=:U_{tc}} \widehat{Q}_{tc} = U_{tc} \widehat{Q}_{tc}
\end{equation*}
and the approximation error can be split into
\begin{align*}
  G|_{\hat t\times\hat s} &- Q_{tc} Q_{tc}^* G|_{\hat t\times\hat s}
   = G|_{\hat t\times\hat s} - U_{tc} \widehat{Q}_{tc} \widehat{Q}_{tc}^* U_{tc}^*
            G|_{\hat t\times\hat s}\\
  &= G|_{\hat t\times\hat s} - U_{tc} U_{tc}^* G|_{\hat t\times\hat s}
     + U_{tc} (I - \widehat{Q}_{tc} \widehat{Q}_{tc}^*) U_{tc}^*
       G|_{\hat t\times\hat s}\\
  &= \begin{pmatrix}
       G|_{\hat t_1\times\hat s}
       - Q_{t_1 c_1} Q_{t_1 c_1}^* G|_{\hat t_1\times\hat s}\\
       \vdots\\
       G|_{\hat t_\ell\times\hat s}
       - Q_{t_\ell c_\ell} Q_{t_\ell c_\ell}^* G|_{\hat t_\ell\times\hat s}
     \end{pmatrix}
   + U_{tc} (I - \widehat{Q}_{tc} \widehat{Q}_{tc}^*)
     U_{tc}^* G|_{\hat t\times\hat s}.
\end{align*}
The ranges of both terms are perpendicular:
for any pair $x,y\in\bbbc^{\hat s}$ of vectors we have
\begin{align*}
  \langle (G|_{\hat t\times\hat s}
             - U_{tc} U_{tc}^* G|_{\hat t\times\hat s}) x,&
          U_{tc} (I - \widehat{Q}_{tc} \widehat{Q}_{tc}^*)
          U_{tc}^* G|_{\hat t\times\hat s} y \rangle\\
  &= \langle U_{tc}^* (G|_{\hat t\times\hat s}
             - U_{tc} U_{tc}^* G|_{\hat t\times\hat s}) x,
          (I - \widehat{Q}_{tc} \widehat{Q}_{tc}^*)
          U_{tc}^* G|_{\hat t\times\hat s} y \rangle\\
  &= \langle (U_{tc}^* G|_{\hat t\times\hat s}
              - U_{tc}^* G|_{\hat t\times\hat s}) x,
          (I - \widehat{Q}_{tc} \widehat{Q}_{tc}^*)
          U_{tc}^* G|_{\hat t\times\hat s} y \rangle = 0
\end{align*}
due to $U_{tc}^* U_{tc} = I$.
By Pythagoras' theorem, this implies
\begin{align}\label{eq:error_step}
  \|(G|_{\hat t\times\hat s} - Q_{tc} Q_{tc}^* G|_{\hat t\times\hat s})
    x\|_2^2
  &= \sum_{i=1}^\ell \|(G|_{\hat t_i\times\hat s}
             - Q_{t_i c_i} Q_{t_i c_i}^* G|_{\hat t_i\times\hat s})x\|_2^2\\
  &\qquad + \|(I - \widehat{Q}_{tc} \widehat{Q}_{tc}^*)
         U_{tc}^* G|_{\hat t\times\hat s}x\|_2^2
      \qquad\text{ for all } x\in\bbbc^{\hat s},\notag
\end{align}
i.e., we can split the error \emph{exactly} into contributions of the children
and a contribution of the parent $t$.
If the children have children again, we can proceed by induction.
To make this precise, we introduce the sets of \emph{descendants}
\begin{equation*}
  \desc(t,c) := \begin{cases}
    \{ (t,c) \} &\text{ if } \chil(t)=\emptyset,\\
    \{ (t,c) \} \cup \bigcup_{t'\in\chil(t)} \desc(t',\dirchil(t',c))
    &\text{ otherwise}
  \end{cases}
\end{equation*}
for all $t\in\ctI$ and $c\in\mathcal{D}_t$.

%
%
\begin{theorem}[Error representation]
\label{th:error_representation}
We define
\begin{align*}
  \widehat{G}_{tsc} &:= \begin{cases}
    G|_{\hat t\times\hat s} &\text{ if } \chil(t)=\emptyset,\\
    U_{tc}^* G|_{\hat t\times\hat s} &\text{ otherwise}
  \end{cases} &
  &\text{ for all } t,s\in\ctI,\ c\in\mathcal{D}_t.
\end{align*}
In the previous section, we have already defined $\widehat{Q}_{tc}$
for non-leaf clusters.
We extend this notation by setting $\widehat{Q}_{tc} := Q_{tc}$ for
all leaf clusters $t\in\ctI$ and all $c\in\mathcal{D}_t$.
Then we have
\begin{equation*}
  \|(G|_{\hat t\times\hat s} - Q_{tc} Q_{tc}^* G|_{\hat t\times\hat s})
    x\|_2^2
  = \kern-10pt \sum_{(t',c')\in\desc(t,c)}
    \|(\widehat{G}_{t'sc'} - \widehat{Q}_{t'c'} \widehat{Q}_{t'c'}^*
       \widehat{G}_{t'sc'}) x\|_2^2
\end{equation*}
for all $(t,s)\in\lfaII$ with $c=\dirblock(t,s)$ and all $x\in\bbbc^{\hat s}$.
\end{theorem}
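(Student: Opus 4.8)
The plan is to prove this by induction on the structure of the cluster tree, using the descendant sets $\desc(t,c)$ as the natural induction scaffold. The base case is the leaf case: if $\chil(t)=\emptyset$, then $\desc(t,c)=\{(t,c)\}$, and by our convention $\widehat{Q}_{tc}=Q_{tc}$ and $\widehat{G}_{tsc}=G|_{\hat t\times\hat s}$, so the claimed identity reduces to the tautology $\|(G|_{\hat t\times\hat s}-Q_{tc}Q_{tc}^*G|_{\hat t\times\hat s})x\|_2^2 = \|(\widehat{G}_{tsc}-\widehat{Q}_{tc}\widehat{Q}_{tc}^*\widehat{G}_{tsc})x\|_2^2$. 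Nothing to do here.

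For the induction step, suppose $t$ has children $\{t_1,\ldots,t_\ell\}=\chil(t)$ with directions $c_i=\dirchil(t_i,c)$. The key is equation (\ref{eq:error_step}), which has already been established in the text via the Pythagoras argument: it splits $\|(G|_{\hat t\times\hat s}-Q_{tc}Q_{tc}^*G|_{\hat t\times\hat s})x\|_2^2$ exactly into $\sum_{i=1}^\ell \|(G|_{\hat t_i\times\hat s}-Q_{t_ic_i}Q_{t_ic_i}^*G|_{\hat t_i\times\hat s})x\|_2^2$ plus the parent contribution $\|(I-\widehat{Q}_{tc}\widehat{Q}_{tc}^*)U_{tc}^*G|_{\hat t\times\hat s}x\|_2^2$. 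I would first observe that the parent contribution is exactly the $(t,c)$-term of the right-hand side of the theorem: since $t$ is not a leaf, $\widehat{G}_{tsc}=U_{tc}^*G|_{\hat t\times\hat s}$ by definition, so $\|(\widehat{G}_{tsc}-\widehat{Q}_{tc}\widehat{Q}_{tc}^*\widehat{G}_{tsc})x\|_2^2 = \|(I-\widehat{Q}_{tc}\widehat{Q}_{tc}^*)U_{tc}^*G|_{\hat t\times\hat s}x\|_2^2$. Then, for each child $t_i$, I apply the induction hypothesis: $\|(G|_{\hat t_i\times\hat s}-Q_{t_ic_i}Q_{t_ic_i}^*G|_{\hat t_i\times\hat s})x\|_2^2 = \sum_{(t',c')\in\desc(t_i,c_i)} \|(\widehat{G}_{t'sc'}-\widehat{Q}_{t'c'}\widehat{Q}_{t'c'}^*\widehat{G}_{t'sc'})x\|_2^2$. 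Summing over $i$ and adding the parent term, and using that $\desc(t,c) = \{(t,c)\}\cup\bigcup_{i=1}^\ell \desc(t_i,c_i)$ is a disjoint union (the descendant sets of distinct children are disjoint because the corresponding index subsets $\hat t_i$ are disjoint, and $(t,c)$ itself is not a descendant of any child), yields precisely the claimed sum over $\desc(t,c)$.

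One technical point worth flagging: to invoke the induction hypothesis on a child $t_i$, the theorem as stated requires $(t_i,s)$ to be an admissible leaf of the block tree with $\dirblock(t_i,s)=c_i$, which need not hold — $(t,s)$ is the admissible leaf, not $(t_i,s)$. So strictly speaking the induction should be run on a slightly more general statement, namely the identity
\begin{equation*}
  \|(G|_{\hat t\times\hat s} - \widehat{Q}_{tc}\widehat{Q}_{tc}^*\,\text{(appropriate restriction)})x\|_2^2 = \kern-6pt\sum_{(t',c')\in\desc(t,c)} \|(\widehat{G}_{t'sc'} - \widehat{Q}_{t'c'}\widehat{Q}_{t'c'}^*\widehat{G}_{t'sc'})x\|_2^2,
\end{equation*}
valid for \emph{any} $t\in\ctI$, $c\in\mathcal{D}_t$, $s\in\ctI$ and $x\in\bbbc^{\hat s}$ for which the projections make sense — i.e., stating the error decomposition at the level of the cluster $t$ rather than only at admissible blocks. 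The admissibility hypothesis on $(t,s)$ is then only needed to guarantee that $Q_{tc}Q_{tc}^*G|_{\hat t\times\hat s}$ is genuinely the approximation the algorithm produces; the algebraic identity itself is purely about isometric matrices, block-diagonal structure, and repeated application of Pythagoras, none of which cares about admissibility. I expect the main (minor) obstacle to be bookkeeping: making sure the $\widehat{G}_{t'sc'}$ that appear in the child recursion are the correct objects, i.e., that the restriction of $U_{tc}^*G|_{\hat t\times\hat s}$ to the block row corresponding to $t_i$ is exactly $Q_{t_ic_i}^*G|_{\hat t_i\times\hat s} = \widehat{G}_{t_isc_i}$ when $t_i$ is a non-leaf, and $G|_{\hat t_i\times\hat s}=\widehat{G}_{t_isc_i}$ when $t_i$ is a leaf — both of which follow immediately from the block-diagonal form of $U_{tc}$ and the definitions, but need to be spelled out to make the telescoping of $\desc$ rigorous.
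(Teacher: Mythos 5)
Your proposal is correct and follows essentially the same route as the paper, which simply invokes the exact splitting (\ref{eq:error_step}) and concludes by ``a straightforward induction.'' Your additional observation --- that the induction hypothesis must be stated for arbitrary clusters $t$ rather than only for admissible blocks $(t,s)\in\lfaII$, since the children $(t_i,s)$ are generally not leaves of the block tree --- is a legitimate refinement of a point the paper's one-line proof glosses over, and your handling of it (generalizing the statement, noting that admissibility plays no role in the algebraic identity) is exactly right.
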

\begin{proof}
With the new notation, (\ref{eq:error_step}) takes the form
\begin{align*}
  \|(G|_{\hat t\times\hat s} - Q_{tc} Q_{tc}^* G|_{\hat t\times\hat s})
    x\|_2^2
  &= \sum_{\substack{t'\in\chil(t)\\ c'=\dirchil(t',c)}}^\ell
    \|(G|_{\hat t'\times\hat s} - Q_{t' c'} Q_{t' c'}^*
       G|_{\hat t'\times\hat s})x\|_2^2\\
  &\qquad + \|(\widehat{G}_{tsc} - \widehat{Q}_{tc} \widehat{Q}_{tc}^*
    \widehat{G}_{tsc})x\|_2^2,
\end{align*}
and a straightforward induction yields the result.
\end{proof}

We can see that the matrices $\widehat{G}_{tsc}$ required by this
theorem appear explicitly in the compression algorithm:
$G_{tc}$ is the combination of all matrices $\widehat{G}_{tsc}$
for $s\in\mathcal{R}_{tc}^*$ if $t$ is a leaf, and otherwise
$\widehat{G}_{tc} = U_{tc}^* G_{tc} = \widehat{V}_{tc} Z_{tc}^* P_{tc}^*$
is the combination of all matrices $\widehat{G}_{tsc}$ for
$s\in\mathcal{R}_{tc}^*$.

The compression algorithm computes the singular value decompositions
of the matrices $G_{tc}$ and $\widehat{G}_{tc}$, respectively, so we
have all the singular values at our disposal to guarantee
$\|G_{tc} - Q_{tc} Q_{tc}^* G_{tc}\|_2\leq\epsilon$ or
$\|\widehat{G}_{tc} - \widehat{Q}_{tc} \widehat{Q}_{tc}^*
\widehat{G}_{tc}\|_2\leq\epsilon$, respectively, for any given
accuracy $\epsilon\in\bbbr_{>0}$ by ensuring that the first dropped
singular value $\sigma_{k_{tc}+1}$ is less than $\epsilon$.

\subsection{Block-relative error control}
Although multiple submatrices are combined in $G_{tc}$, they do not
all have to be treated identically \cite[Chapter~6.8]{BO10}:
we can scale the different submatrices with individually chosen
weights, e.g., given $t\in\ctI$ and $c\in\mathcal{D}_t$, we can
choose a weight $\omega_{ts}\in\bbbr_{>0}$ for every
$s\in\mathcal{R}_{tc}^*$ and replace $G_{tc}$ by
\begin{equation*}
  G_{\omega,tc} := \begin{pmatrix}
    \omega_{ts_1}^{-1} G|_{\hat t\times\hat s_1} &
    \ldots &
    \omega_{ts_m}^{-1} G|_{\hat t\times\hat s_m}
  \end{pmatrix}
\end{equation*}
with the enumeration $\mathcal{R}_{tc}^* = \{ s_1,\ldots, s_m \}$.
Correspondingly, $\widehat{G}_{tc}$ is replaced by a weighted version
$\widehat{G}_{\omega,tc}$ and $Z_{tc}$ by $Z_{\omega,tc}$.
The modified algorithm will now guarantee
\begin{align*}
  \|\widehat{G}_{tsc} - \widehat{Q}_{tc} \widehat{Q}_{tc}^* \widehat{G}_{tsc}\|_2
  &= \omega_{ts} \|G_{\omega,tc}|_{\hat t\times\hat s}
                   - Q_{tc} Q_{tc}^* G_{\omega,tc}|_{\hat t\times\hat s}\|_2\\
  &\leq \omega_{ts} \|G_{\omega,tc} - Q_{tc} Q_{tc}^* G_{\omega,tc}\|_2
   \leq \omega_{ts} \epsilon
\intertext{for leaf clusters $t\in\ctI$ and}
  \|\widehat{G}_{tsc} - \widehat{Q}_{tc} \widehat{Q}_{tc}^* \widehat{G}_{tsc}\|_2
  &= \omega_{ts} \|U_t^* G_{\omega,tc}|_{\hat t\times\hat s}
                   - \widehat{Q}_{tc} \widehat{Q}_{tc}^*
                     U_t^* G_{\omega,tc}|_{\hat t\times\hat s}\|_2\\
  &\leq \omega_{ts} \|\widehat{G}_{\omega,tc} - \widehat{Q}_{tc}
    \widehat{Q}_{tc}^* \widehat{G}_{\omega,tc}\|_2
   \leq \omega_{ts} \epsilon
\end{align*}
for non-leaf clusters.
With these modifications, Theorem~\ref{th:error_representation} yields
\begin{align}\label{eq:block_error}
  \|(G|_{\hat t\times\hat s} - Q_{tc} Q_{tc}^* G|_{\hat t\times\hat s})x\|_2^2
  &\leq \sum_{(t',c')\in\desc(t,c)} \omega_{t's}^2 \epsilon^2 &
  &\text{ for all } (t,s)\in\lfaII.
\end{align}
The weights $\omega_{ts}$ can be used to keep the error closely under
control.
As an example, we consider how to implement \emph{block-relative}
error controls, i.e., how to ensure
\begin{equation*}
  \|G|_{\hat t\times\hat s} - Q_{tc} Q_{tc}^* G|_{\hat t\times\hat s}\|_2
  \leq \epsilon \|G|_{\hat t\times\hat s}\|
\end{equation*}
for a block $(t,s)\in\lfaII$.
We start by observing that we have
\begin{equation*}
  \|G|_{\hat t\times\hat s}\|_2
  = \|V_{tc} S_{ts} V_{sc}^*\|_2
  = \|R_{tc} S_{ts} R_{sc}^*\|_2
\end{equation*}
due to the admissibility and using the basis weights introduced in
Definition~\ref{de:basis_weights}, so the spectral norm can be
computed efficiently.

We assume that a cluster can have at most $m$ children and set
\begin{align*}
  \omega_{t's}
  &:= \begin{cases}
        \frac{1}{\sqrt{m+1}} \|G|_{\hat t\times\hat s}\|_2
        &\text{ if } t'=t,\\
        \frac{1}{\sqrt{m+1}} \omega_{t^+s}
        &\text{ if } t'\in\chil(t^+)
      \end{cases} &
  &\text{ for all } (t',c')\in\desc(t,c).
\end{align*}
Keeping in mind that every cluster can have at most $m$ children,
substituting $\omega_{t's}$ in (\ref{eq:block_error}) and summing up
level by level yields
\begin{equation*}
  \|G|_{\hat t\times\hat s} - Q_{tc} Q_{tc}^* G|_{\hat t\times\hat s}\|_2^2
  \leq \sum_{(t',c')\in\desc(t,c)} \omega_{t's}^2 \epsilon^2
  \leq \sum_{\ell=0}^\infty \left(\frac{m}{m+1}\right)^\ell
                 \omega_{ts}^2 \epsilon^2,
\end{equation*}
allowing us to evaluate the geometric sum to conclude
\begin{equation*}
  \|G|_{\hat t\times\hat s} - Q_{tc} Q_{tc}^* G|_{\hat t\times\hat s}\|_2^2
  \leq \frac{1}{1 - \frac{m}{m+1}} \omega_{ts}^2 \epsilon^2
  = (m+1) \omega_{ts}^2 \epsilon^2 = \epsilon^2 \|G|_{\hat t\times\hat s}\|_2^2.
\end{equation*}
The weights $\omega_{t's}$ can be computed and conveniently included
during the construction of the total weights at only minimal additional
cost.

\subsection{Stability}
In order to improve the efficiency of our algorithm, we would like
to replace the $\mathcal{DH}^2$-matrix $G$ by an approximation.
If we want to ensure that the result of the compression algorithm
is still useful, we have to investigate its stability.
In the following, $G$ denotes the matrix treated during the compression
algorithm, while $H$ denotes the matrix that we actually want to approximate.

%
%
\begin{lemma}[Stability]
Let $H\in\bbbc^{\Idx\times\Idx}$.
We have
\begin{align*}
  \|(H|_{\hat t\times\hat s} - Q_{tc} Q_{tc}^* H|_{\hat t\times\hat s})x\|_2
  &\leq \|(G|_{\hat t\times\hat s}
               - Q_{tc} Q_{tc}^* G|_{\hat t\times\hat s}x\|_2
   + \|(H|_{\hat t\times\hat s} - G|_{\hat t\times\hat s})x\|_2
\end{align*}
for all $(t,s)\in\lfaII$ with $c=\dirblock(t,s)$ and all $x\in\bbbc^{\hat s}$.
\end{lemma}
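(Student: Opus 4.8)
The plan is to obtain the inequality directly from the triangle inequality for the Euclidean norm, combined with the fact that orthogonal projections are non-expansive. First I would write, for the fixed block $(t,s)\in\lfaII$ with $c=\dirblock(t,s)$ and a fixed $x\in\bbbc^{\hat s}$,
\begin{align*}
  H|_{\hat t\times\hat s} - Q_{tc} Q_{tc}^* H|_{\hat t\times\hat s}
  &= (G|_{\hat t\times\hat s} - Q_{tc} Q_{tc}^* G|_{\hat t\times\hat s})\\
  &\quad + (I - Q_{tc} Q_{tc}^*)(H|_{\hat t\times\hat s} - G|_{\hat t\times\hat s}),
\end{align*}
which is just adding and subtracting $(I - Q_{tc} Q_{tc}^*) G|_{\hat t\times\hat s}$ and regrouping. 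Applying this to $x$ and using the triangle inequality gives
\begin{equation*}
  \|(H|_{\hat t\times\hat s} - Q_{tc} Q_{tc}^* H|_{\hat t\times\hat s})x\|_2
  \leq \|(G|_{\hat t\times\hat s} - Q_{tc} Q_{tc}^* G|_{\hat t\times\hat s})x\|_2
   + \|(I - Q_{tc} Q_{tc}^*)(H|_{\hat t\times\hat s} - G|_{\hat t\times\hat s})x\|_2.
\end{equation*}

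The remaining step is to discard the projection in the last term. Since $Q_{tc}$ is isometric, $Q_{tc} Q_{tc}^*$ is an orthogonal projection, hence so is $I - Q_{tc} Q_{tc}^*$, and every orthogonal projection has spectral norm at most $1$; therefore
\begin{equation*}
  \|(I - Q_{tc} Q_{tc}^*)(H|_{\hat t\times\hat s} - G|_{\hat t\times\hat s})x\|_2
  \leq \|(H|_{\hat t\times\hat s} - G|_{\hat t\times\hat s})x\|_2.
\end{equation*}
Substituting this bound into the previous display yields the claimed inequality.

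There is essentially no obstacle here: the statement is a routine stability estimate, and the only thing one needs is that $Q_{tc}^* Q_{tc} = I$ (which holds by construction of the adaptive basis, and in the leaf/non-leaf cases alike, since $Q_{tc}$ is built as a product of isometric matrices). If one wants to be careful, the mild point to note is that the inequality holds pointwise in $x$ exactly because we never pass to operator norms on the right-hand side — we keep everything applied to the same vector $x$, so no compatibility of norms is needed. The result will later be combined with the block-relative error control of the previous subsection and with an estimate for $\|(H-G)|_{\hat t\times\hat s}\|$ coming from the weight-matrix compression, but for this lemma nothing beyond the triangle inequality and the contractivity of orthogonal projections is required.
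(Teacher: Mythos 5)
Your proposal is correct and follows essentially the same route as the paper: the same add-and-subtract decomposition into $(G - Q_{tc}Q_{tc}^*G)|_{\hat t\times\hat s}$ plus $(I-Q_{tc}Q_{tc}^*)(H-G)|_{\hat t\times\hat s}$, the triangle inequality, and the non-expansiveness of the orthogonal projection $I-Q_{tc}Q_{tc}^*$. The only cosmetic difference is that the paper verifies the contraction property by expanding $\|(I-Q_{tc}Q_{tc}^*)y\|_2^2 = \|y\|_2^2 - \|Q_{tc}Q_{tc}^*y\|_2^2$ explicitly, whereas you invoke it as a standard fact.
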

\begin{proof}
Let $(t,s)\in\lfaII$, $c=\dirblock(t,s)$ and $x\in\bbbc^{\hat s}$.
We have
\begin{align*}
  H|_{\hat t\times\hat s} - Q_{tc} Q_{tc}^* H|_{\hat t\times\hat s}
  &= G|_{\hat t\times\hat s} + (H-G)|_{\hat t\times\hat s}
     - Q_{tc} Q_{tc}^* G|_{\hat t\times\hat s}
     - Q_{tc} Q_{tc}^* (H-G)|_{\hat t\times\hat s}\\
  &= G|_{\hat t\times\hat s} - Q_{tc} Q_{tc}^* G|_{\hat t\times\hat s}
   + (I - Q_{tc} Q_{tc}^*) (H-G)|_{\hat t\times\hat s}
\end{align*}
and therefore by the triangle inequality
\begin{equation*}
  \|(H|_{\hat t\times\hat s} - Q_{tc} Q_{tc}^* H|_{\hat t\times\hat s})x\|_2
  \leq \|(G|_{\hat t\times\hat s} - 
             Q_{tc} Q_{tc}^* G|_{\hat t\times\hat s})x\|_2
     + \|(I - Q_{tc} Q_{tc}^*) (H-G)x\|_2.
\end{equation*}  
We let $y := (H-G)|_{\hat t\times\hat s} x$ and make use of the isometry of
$Q_{tc}$ to find
\begin{align*}
  \|(I-Q_{tc} Q_{tc}^*) y\|_2^2
  &= \|y\|_2^2 - \langle y, Q_{tc} Q_{tc}^* y \rangle_2
     - \langle Q_{tc} Q_{tc}^* y, y \rangle_2
     + \|Q_{tc} Q_{tc}^* y\|_2^2\\
  &= \|y\|_2^2 - 2 \langle y, Q_{tc} Q_{tc}^* Q_{tc} Q_{tc}^* y \rangle_2
     + \|Q_{tc} Q_{tc}^* y\|_2^2\\
  &= \|y\|_2^2 - 2 \langle Q_{tc} Q_{tc}^* y, Q_{tc} Q_{tc}^* y \rangle_2
     + \|Q_{tc} Q_{tc}^* y\|_2^2\\
  &= \|y\|_2^2 - \|Q_{tc} Q_{tc}^* y\|_2^2 \leq \|y\|_2^2.
\end{align*}
Since this is equivalent with $\|(I-Q_{tc} Q_{tc}^*)(H-G)|_{\hat t\times\hat s} x\|_2^2
\leq \|(H-G)|_{\hat t\times\hat s} x\|_2^2$, the proof is complete.
\end{proof}

This lemma implies that if we want to approximate a matrix $H$, but
apply the algorithm to an approximation $G$ satisfying the block-relative
error estimate
\begin{align*}
  \|H|_{\hat t\times\hat s} - G|_{\hat t\times\hat s}\|_2
  &\leq \epsilon \|H|_{\hat t\times\hat s}\|_2 &
  &\text{ for all } (t,s)\in\lfaII,
\end{align*}
we will obtain
\begin{align*}
  \|H|_{\hat t\times\hat s} - Q_{tc} Q_{tc}^* H|_{\hat t\times\hat s}\|_2
  &\leq \|G|_{\hat t\times\hat s}
           - Q_{tc} Q_{tc}^* G|_{\hat t\times\hat s}\|_2
        + \|H|_{\hat t\times\hat s} - G|_{\hat t\times\hat s}\|_2\\
  &\leq \epsilon \|G|_{\hat t\times\hat s}\|_2
        + \epsilon \|H|_{\hat t\times\hat s}\|_2\\
  &\leq \epsilon \bigl(\|H|_{\hat t\times\hat s}\|_2
                           + \|G-H|_{\hat t\times\hat s}\|_2\bigr)
        + \epsilon \|H|_{\hat t\times\hat s}\|_2\\
  &\leq \epsilon (1+\epsilon) \|H|_{\hat t\times\hat s}\|_2
        + \epsilon \|H|_{\hat t\times\hat s}\|_2\\
  &= \epsilon(2+\epsilon) \|H|_{\hat t\times\hat s}\|_2,
\end{align*}
i.e., the basis constructed to ensure block-relative error estimates
for the matrix $G$ will also ensure block-relative error estimates
for the matrix $H$, only with a slightly larger error factor
$\approx 2\epsilon$.
Since our error-control strategy can ensure any accuracy $\epsilon>0$,
this is quite satisfactory.

\section{Approximated weights}
\label{se:weights}

Figure~\ref{fi:recomp} suggests that for higher accuracies,
the basis weights $(R_{sc})_{s\in\ctI,\ c\in\mathcal{D}_s}$ can require
more storage than the entire recompressed $\mathcal{DH}^2$-matrix.
With the error representation of Theorem~\ref{th:error_representation}
and the stability analysis of the previous section at our disposal, we
can investigate ways to reduce the storage requirements without causing
significant harm to the final result.

We do not have to worry about the total weights
$(Z_{tc})_{t\in\ctI, c\in\mathcal{D}_t}$, since they can be set up
during the recursive construction of the adaptive cluster basis.

\subsection{Direct approximation of weights.}
The basis weight matrices $R_{sc}$ are required by our algorithm
when it sets up the total weight matrix $Z_{tc}$ with
\begin{equation*}
  G_{tc} = V_{tc} Z_{tc}^* P_{tc}^*
\end{equation*}
using
\begin{equation*}
  G_{tc}|_{\hat t\times\hat s}
  = V_{tc} S_{ts} W_{sc}^*
  = V_{tc} S_{ts} R_{sc}^* Q_{sc}^*
\end{equation*}
for an admissible block $b=(t,s)\in\lfaII$.
The isometric matrix $Q_{sc}$ influences only $P_{tc}$ and can be
dropped since it does not influence the singular values or the
left singular vectors.

Our goal is to replace the basis weight $R_{sc}$ by an approximation
$\widetilde{R}_{sc}$ while ensuring that the recompression algorithm
keeps working reliably.
We find
\begin{equation*}
  \|G_{tc}|_{\hat t\times\hat s}
    - V_{tc} S_{ts} \widetilde{R}_{sc}^* Q_{sc}^*\|_2
  = \|V_{tc} S_{ts} (R_{sc}^* - \widetilde{R}_{sc}^*) Q_{sc}^*\|_2
  = \|V_{tc} S_{ts} (R_{sc} - \widetilde{R}_{sc})^*\|_2
\end{equation*}
and conclude that it is sufficient to ensure that the product
$\widetilde{R}_{sc} S_{ts}^*$ is a good approximation of the product
$R_{sc} S_{ts}^*$, we do not require $\widetilde{R}_{sc}$ itself to be
a good approximation of $R_{sc}$.
This is a crucial observation, because important approximation properties
are due to the kernel function represented by $S_{ts}$, not due to the
essentially arbitrary polynomial basis represented by $R_{sc}$.

Since the basis weight $R_{sc}$ will be used for multiple clusters
$t\in\ctI$, we introduce the sets
\begin{align}\label{eq:Csc_definition}
  \mathcal{C}_{sc} &:= \{ t\in\ctI\ :\ (t,s)\in\lfaII,
           \ \dirblock(t,s)=c \} &
  &\text{ for all } s\in\ctI,\ c\in\mathcal{D}_s
\end{align}
in analogy to the sets $\mathcal{R}_{tc}$ used for the compression
algorithm in (\ref{eq:Rtc_definition}).
Enumerating the elements by $\mathcal{C}_{sc} = \{t_1,\ldots,t_m\}$
leads us to consider the approximation of the matrix
\begin{equation}\label{eq:Wsc_def}
  W_{sc} := R_{sc} \begin{pmatrix}
                    S_{t_1s}^* & \ldots & S_{t_ms}^*
                  \end{pmatrix}.
\end{equation}
The optimal solution is again provided by the singular value
decomposition of $W_{sc}$:
for the singular values $\sigma_1,\sigma_2,\ldots$ and a given
accuracy $\epsilon\in\bbbr_{>0}$, we choose a rank $k_{sc}\in\bbbn$
such that $\sigma_{k_{sc}+1} \leq \epsilon$ and combine the first
$k_{sc}$ left singular vectors in an isometric matrix $\widetilde{Q}_{sc}$.
We use the corresponding low-rank approximation
$\widetilde{R}_{sc} := \widetilde{Q}_{sc} \widetilde{Q}_{sc}^* R_{sc}$ and find
\begin{equation*}
  \|R_{sc} S_{st}^* - \widetilde{R}_{sc} S_{st}^*\|_2
  = \|R_{sc} S_{st}^* - \widetilde{Q}_{sc} \widetilde{Q}_{sc}^* R_{sc} S_{st}^*\|_2
  \leq \|W_{sc} - \widetilde{Q}_{sc} \widetilde{Q}_{sc}^* W_{sc}\|_2
  \leq \epsilon.
\end{equation*}
The resulting algorithm is summarized in Figure~\ref{fi:approx_weights}.
It is important to note that the original basis weights $R_{sc}$ are
discarded as soon as they are no longer needed so that the original weights
have to be kept in storage only for the children of the current cluster
and the children of its ancestors at every point of the algorithm.

For the basis construction algorithm, cf. Figure~\ref{fi:build_basis},
only the coefficient matrices $\widehat{R}_{sc} := \widetilde{Q}_{sc}^*
R_{sc}$ are required, since the isometric matrices $\widetilde{Q}_{sc}$
do not influence the singular values and the left singular vectors.
Our algorithm only provides these matrices to save storage.

%
%
\begin{figure}
  \begin{quotation}
    \begin{tabbing}
      \textbf{procedure} approx\_weights($s$);\\
      \textbf{begin}\\
      \quad\= \textbf{if} $\chil(s)=\emptyset$ \textbf{then}\\
      \> \quad\= \textbf{for} $c\in\mathcal{D}_s$ \textbf{do begin}\\
      \> \> \quad\= Find a thin Householder decomposition
                        $V_{sc} = Q_{sc} R_{sc}$;\\
      \> \> \> Set up $W_{sc}$ as in (\ref{eq:Wsc_def}) or (\ref{eq:Wsc_omega});\\
      \> \> \> Compute the singular value decomposition
                   $W_{sc} = U \Sigma V^*$;\\
      \> \> \> Choose a rank $k_{sc}$, shrink $U$ to its first $k_{sc}$ columns;\\
      \> \> \> $\widehat{R}_{sc} \gets U^* R_{sc}$\\
      \> \> \textbf{end}\\
      \> \textbf{else begin}\\
      \> \> \textbf{for} $s'\in\chil(s)$ \textbf{do}\\
      \> \> \> approx\_weights($s'$);\\
      \> \> \textbf{for} $c\in\mathcal{D}_s$ \textbf{do begin}\\
      \> \> \> Set up $\widehat{V}_{sc}\in\bbbc^{M_{sc}\times k}$ as
                   in (\ref{eq:Vhat_sc});\\
      \> \> \> Find a thin Householder decomposition
                        $\widehat{V}_{sc} = \widehat{Q}_{sc} R_{sc}$;\\
      \> \> \> Set up $W_{sc}$ as in (\ref{eq:Wsc_def}) or (\ref{eq:Wsc_omega});\\
      \> \> \> Compute the singular value decomposition
                   $W_{sc} = U \Sigma V^*$;\\
      \> \> \> Choose a rank $k_{sc}$, shrink $U$ to its first $k_{sc}$ columns;\\
      \> \> \> $\widehat{R}_{sc} \gets U^* R_{sc}$\\
      \> \> \textbf{end};\\
      \> \> \textbf{for} $s'\in\chil(s)$, $c'\in\mathcal{D}_{s'}$ \textbf{do}\\
      \> \> \> Discard $R_{s'c'}$ from memory\\
      \> \textbf{end}\\
      \textbf{end}
    \end{tabbing}
  \end{quotation}
  \caption{Construction of the approximated basis weights $\widetilde{R}_{sc}=
    \widetilde{Q}_{sc} \widehat{R}_{sc}$}
  \label{fi:approx_weights}
\end{figure}

\subsection{Block-relative error control}

Again, we are interested in blockwise relative error estimates,
and as before, we can modify the blockwise approximation by
introducing weights $\omega_{ts}\in\bbbr_{>0}$ and considering
\begin{equation}\label{eq:Wsc_omega}
  W_{\omega,sc} := R_{sc} \begin{pmatrix}
                          \omega_{t_1s}^{-1} S_{t_1s}^* & \ldots
                          & \omega_{t_ms}^{-1} S_{t_m s}^*
                        \end{pmatrix}.
\end{equation}
Replacing $W_{sc}$ by $W_{\omega,sc}$ yields
\begin{align*}
  \|R_{sc} S_{st}^* - \widetilde{R}_{sc} S_{st}^*\|_2
  &= \omega_{ts} \|R_{sc} \omega_{ts}^{-1} S_{st}^*
                    - \widetilde{Q}_{sc} \widetilde{Q}_{sc}^* R_{sc} \omega_{ts}^{-1} S_{st}^*\|_2\\
  &\leq \omega_{ts} \|W_{\omega,sc} - \widetilde{Q}_{sc} \widetilde{Q}_{sc}^* W_{\omega,sc}\|_2
   \leq \omega_{ts} \epsilon.
\end{align*}
For the blockwise error we obtain
\begin{align*}
  \|G|_{\hat t\times\hat s} - V_{tc} S_{ts} \widetilde{R}_{sc}^* Q_{sc}^*\|_2
  &= \|V_{tc} S_{ts} W_{sc}^* - V_{tc} S_{tc} \widetilde{R}_{sc}^* Q_{sc}^*\|_2\\
  &= \|V_{tc} S_{ts} (R_{sc} - \widetilde{R}_{sc})^* Q_{sc}^*\|_2\\
  &\leq \|V_{tc}\|_2 \|Q_{sc} (R_{sc} - \widetilde{R}_{sc}) S_{ts}^*\|_2\\
  &= \|V_{tc}\|_2 \|(R_{sc} - \widetilde{R}_{sc}) S_{ts}^*\|_2
   \leq \|V_{tc}\|_2 \omega_{ts} \epsilon,
\end{align*}
so a relative error bound is guaranteed if we ensure
\begin{equation*}
  \omega_{ts} \leq \frac{\|V_{tc} S_{ts} V_{sc}^*\|_2}{\|V_{tc}\|_2}.
\end{equation*}
Evaluating the numerator and denominator exactly would require
us again to have the full basis weights at our disposal.
Fortunately, a projection is sufficient for our purposes:
in a preparation step, we compute the basis weight $R_{tc}$,
find its singular value decomposition and use a given number
$k_\text{norm}\in\bbbn$ of left singular vectors to form an auxiliary
isometric matrix $P_{tc}$ and store $N_{tc} := P_{tc}^* R_{rc}$.
Since the first singular value corresponds to the spectral
norm of $R_{tc}$, and therefore the spectral norm of $V_{tc}$,
we have
\begin{equation*}
  \|N_{tc}\|_2 = \|P_{tc}^* R_{tc}\|_2 = \|R_{tc}\|_2 = \|V_{tc}\|_2
\end{equation*}
and can evaluate the denominator exactly.
Since $P_{tc} P_{tc}^*$ is an orthogonal projection, we also have
\begin{equation*}
  \|V_{tc} S_{ts} V_{sc}^*\|_2
  = \|R_{tc} S_{ts} R_{sc}^*\|_2
  \geq \|P_{tc}^* R_{tc} S_{ts} R_{sc}^*\|_2
  = \|N_{tc} S_{ts} R_{sc}^*\|_2,
\end{equation*}
i.e., we can find a lower bound for the numerator.
Fortunately, a lower bound is sufficient for our purposes, and
we can use
\begin{equation*}
  \omega_{ts} := \frac{\|N_{tc} S_{ts} R_{sc}^*\|_2}{\|N_{tc}\|_2}
              \leq \frac{\|V_{tc} S_{ts} V_{sc}^*\|_2}{\|V_{tc}\|_2}.
\end{equation*}
The algorithm for constructing the norm-estimation matrices
$N_{tc}$ is summarized in Figure~\ref{fi:approx_norms}.
It uses exact Householder factorizations $Q_{tc} R_{tc} = V_{tc}$
for all basis matrices and then truncates $R_{tc}$.
In order to make the computation efficient, we use
\begin{align}\label{eq:Vhat_tc_exact}
  V_{tc}
  &= \begin{pmatrix}
       V_{t_1,c_1} E_{t_1 c}\\
       \vdots\\
       V_{t_n,c_n} E_{t_n c}
     \end{pmatrix}
   = \begin{pmatrix}
       Q_{t_1,c_1} & & \\
       & \ddots & \\
       & & Q_{t_n c_n}
     \end{pmatrix} \widehat{V}_{tc}, &
  \widehat{V}_{tc} &:= \begin{pmatrix}
    R_{t_1,c_1} E_{t_1 c}\\
    \vdots\\
    R_{t_n,c_n} E_{t_n c}
  \end{pmatrix}
\end{align}
to replace $V_{tc}$ by the projected matrix $\widehat{V}_{tc}$
with $\chil(t)=\{t_1,\ldots,t_n\}$ and $c_i=\dirchil(t_i,c)$
for all $i\in[1:n]$.

Figure~\ref{fi:compweight} shows that compressing the basis
weights following these principles leaves the accuracy of the
matrix intact and significantly reduces the storage requirements.

%
%
\begin{figure}
  \begin{quotation}
    \begin{tabbing}
      \textbf{procedure} approx\_norms($t$);\\
      \textbf{begin}\\
      \quad\= \textbf{if} $\chil(t)=\emptyset$ \textbf{then}\\
      \> \quad\= \textbf{for} $c\in\mathcal{D}_t$ \textbf{do begin}\\
      \> \> \quad\= Find a thin Householder decomposition
                        $V_{tc} = Q_{tc} R_{tc}$;\\
      \> \> \> Compute the singular value decomposition
                        $R_{tc} = U \Sigma V^*$;\\
      \> \> \> Shrink $U$ to its first $k_\text{norm}$ columns;\\
      \> \> \> $N_{tc} \gets U^* R_{tc}$\\
      \> \> \textbf{end}\\
      \> \textbf{else begin}\\
      \> \> \textbf{for} $t'\in\chil(t)$ \textbf{do}\\
      \> \> \> approx\_norms($t'$);\\
      \> \> \textbf{for} $c\in\mathcal{D}_t$ \textbf{do begin}\\
      \> \> \> Set up $\widehat{V}_{tc}$ as in (\ref{eq:Vhat_tc_exact});\\
      \> \> \> Find a thin Householder decomposition
                        $\widehat{V}_{tc} = \widehat{Q}_{tc} R_{tc}$\\
      \> \> \> Compute the singular value decomposition
                        $R_{tc} = U \Sigma V^*$;\\
      \> \> \> Shrink $U$ to its first $k_\text{norm}$ columns;\\
      \> \> \> $N_{tc} \gets U^* R_{tc}$\\
      \> \> \textbf{end};\\
      \> \> \textbf{for} $t'\in\chil(t)$, $c'\in\mathcal{D}_{t'}$ \textbf{do}\\
      \> \> \> Discard $R_{t'c'}$ from memory\\
      \> \textbf{end}\\
      \textbf{end}
    \end{tabbing}
  \end{quotation}
  \caption{Construction of norm-approximation matrices $N_{tc}$}
  \label{fi:approx_norms}
\end{figure}

%
%
\begin{figure}
  \begin{center}
    \includegraphics[width=0.45\textwidth]{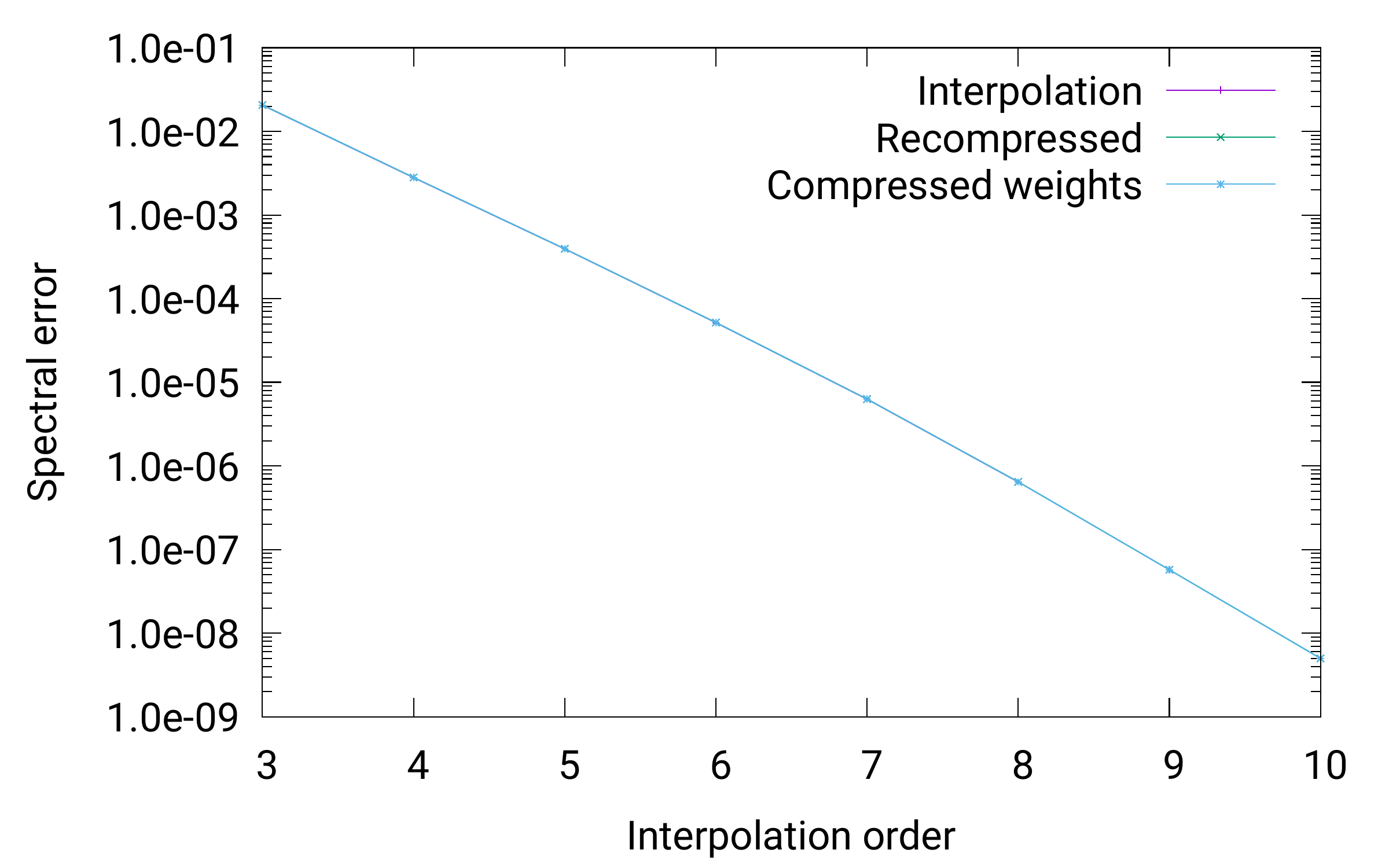}%
    \qquad%
    \includegraphics[width=0.45\textwidth]{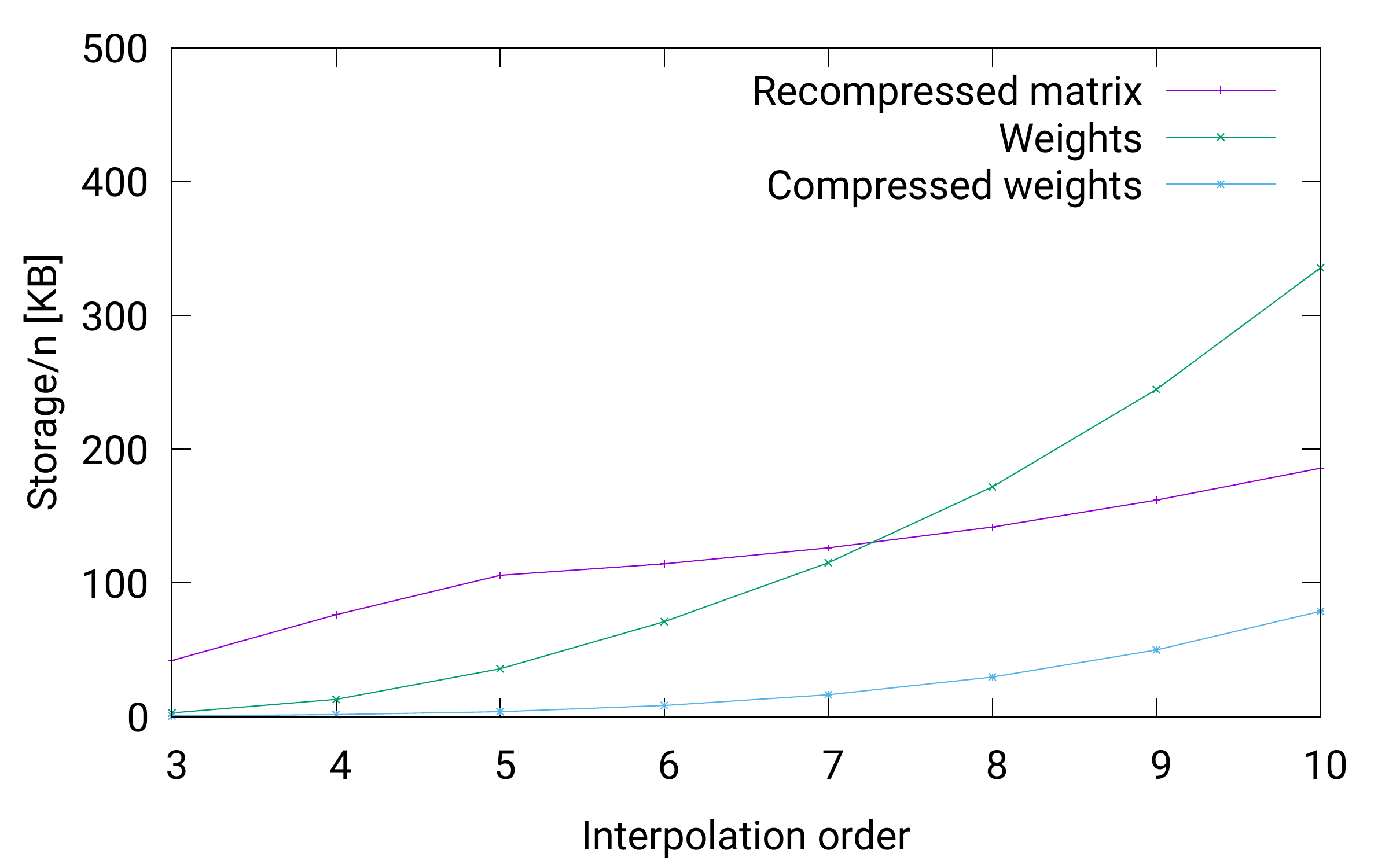}
  \end{center}
 
  \caption{Left: Convergence of recompressed interpolation with
    compressed weights.
    Right: Storage requirements of compressed and uncompressed weights}
  \label{fi:compweight}
\end{figure}

\section{Numerical experiments}
\label{se:experiments}

To demonstrate the properties of the new algorithms in practical
applications, we consider direct boundary integral formulations
for the Helmholtz problem on the unit sphere.
We create a mesh for the unit sphere by starting with a double
pyramid $P=\{x\in\bbbr^3\ :\ |x_1|+|x_2|+|x_3|=1\}$ and refining
each of its faces into $m^2$ triangles, where
$m\in\{16,24,32,48,\ldots,1024\}$.
Projecting these triangles' vertices to the unit sphere yields
regular surface meshes with between $2\,048$ and $8\,388\,608$
triangles.

We discretize the direct boundary integral formulations for
the Dirichlet-to-Neumann and the Neumann-to-Dirichlet problem
with piecewise constant basis functions for the Neumann values
and continuous linear nodal basis functions for the Dirichlet
values.
The approximation of the single-layer matrix by a
$\mathcal{DH}^2$-matrix has already been discussed.

For the double-layer matrix, we apply directional interpolation to
the kernel function and take the normal derivative of the result.
This again yields an $\mathcal{DH}^2$-matrix.
The approximation error has been investigated in \cite{BO22}.

For the hypersingular matrix, we use partial integration
\cite[Corollary~3.3.24]{SASC11} and again apply directional
interpolation to the remaining kernel function.

In order to save storage, basis weights for the row and column
basis of the single-layer matrix and the row basis of the double-layer
matrix are shared, and basis weights for the column basis of the
double-layer matrix and the row and column basis of the hypersingular
matrix are also shared.
In our implementation, this can be easily accomplished by including
more matrix blocks in the matrices $W_{sc}$.

The resulting systems of linear equations are solved by a
GMRES method that is preconditioned using an $\mathcal{H}$-LU
factorization \cite[Chapter~7.6]{HA15} of a coarse approximation
of the $\mathcal{DH}^2$-matrix.

%
%
\begin{table}
  \begin{equation*}
    \begin{array}{rr|rrr|rrr|rrr}
      && \multicolumn{3}{c|}{\text{Weight } \varphi_i}
      & \multicolumn{3}{c|}{\text{SLP}}
      & \multicolumn{3}{c}{\text{DLP}}\\
      n & m & \text{Norm} & \text{Comp} & \text{Mem}
      & \text{Row} & \text{Col} & \text{Mem}
      & \text{Row} & \text{Col} & \text{Mem}\\
     \hline
     8\,192 & 3 & <0.1 & 2.6 & 5 & 0.2 & 0.3 & 319 & 0.1 & 0.1 & 331\\
     18\,432 & 4 & 0.2 & 11.5 & 37 & 2.0 & 2.1 & 588 & 1.4 & 1.4 & 648\\
     32\,768 & 4 & 0.2 & 8.7 & 80 & 3.0 & 3.0 & 943 & 1.6 & 2.7 & 951\\
     73\,728 & 5 & 0.9 & 70.5 & 431 & 5.7 & 5.8 & 2\,013 & 4.4 & 4.3 & 1\,792\\
     131\,072 & 5 & 1.3 & 101.3 & 853 & 9.1 & 9.4 & 3\,594 & 7.0 & 6.5 & 3\,243\\
     294\,912 & 5 & 3.6 & 183.1 & 2\,072 & 20.7 & 20.9 & 8\,234 & 14.7 & 13.7 & 7\,774\\
     524\,288 & 6 & 6.5 & 447.7 & 7\,217 & 71.1 & 52.6 & 15\,797 & 44.1 & 37.4 & 13\,959\\
     1\,179\,648 & 6 & 14.1 & 824.2 & 17\,091 & 156.9 & 119.6 & 37\,949 & 100.7 & 71.5 & 35\,281\\
     2\,097\,152 & 7 & 26.1 & 3\,310.3 & 51\,516 & 1\,062.4 & 718.8 & 71\,768 & 602.6 & 513.3 & 62\,032\\
     4\,718\,592 & 7 & 57.8 & 7\,034.6 & 122\,268 & 2\,704.3 & 1\,858.2 & 176\,761 & 1\,606.0 & 1\,316.2 & 162\,386\\
     8\,388\,608 & 7 & 101.9 & 12\,041.2 & 224\,330 & 4\,821.9 & 3\,636.7 & 332\,950 & 2\,799.6 & 2\,616.1 & 323\,817
    \end{array}
  \end{equation*}

  \caption{Helmholtz boundary integral equation with constant wave number $\kappa=4$}
  \label{ta:constant}
\end{table}

Table~\ref{ta:constant} contains results for a first experiment with the
constant wave number $\kappa=4$.  The column ``n'' gives the number of
triangles, the column ``m'' gives the order of the interpolation, the column
``Norm'' gives the time in seconds for the approximation of the matrix norm
with the algorithm given in Figure~\ref{fi:approx_norms}, the column ``Comp''
gives the time for the compression of the weights by the algorithm in
Figure~\ref{fi:approx_weights} and ``Mem'' the storage requirements in MB
for the compressed weight matrices.

The columns ``Row'' and ``Col'' give the times in seconds for constructing the
adaptive row and column cluster bases, both for the single-layer and the
double-layer matrix, while the columns ``Mem'' give the storage requirements
in MB for the compressed $\mathcal{DH}^2$-matrices.

The experiment was performed on a server with two AMD EPYC 7713 processors
with $64$ cores each and a total of $2\,048$ GB of memory.

We can see that the runtimes and storage requirements grow slowly with
increasing matrix dimension and increasing polynomial order, as predicted by
the theory.  Truncation tolerances were chosen to ensure that the convergence
of the original Galerkin discretization is preserved, i.e., we obtain
$L^2$-norm errors falling like $\mathcal{O}(h)$ for the Dirichlet-to-Neumann
problem and like $\mathcal{O}(h^2)$ for the Neumann-to-Dirichlet problem.

%
%
\begin{table}
  \begin{equation*}
    \begin{array}{rrr|rrr|rrr|rrr}
      &&& \multicolumn{3}{c|}{\text{Weight } \varphi_i}
      & \multicolumn{3}{c|}{\text{SLP}}
      & \multicolumn{3}{c}{\text{DLP}}\\
      n & \kappa & m & \text{Norm} & \text{Comp} & \text{Mem}
      & \text{Row} & \text{Col} & \text{Mem}
      & \text{Row} & \text{Col} & \text{Mem}\\
     \hline
     8\,192 & 4 & 3 & <0.1 & 2.3 & 5 & 0.8 & 0.9 & 319 & 0.3 & 0.4 & 331\\
     18\,432 & 6 & 4 & 0.2 & 12.3 & 35 & 3.2 & 3.2 & 993 & 1.7 & 2.0 & 1\,057\\
     32\,768 & 8 & 4 & 0.3 & 33.8 & 77 & 7.2 & 7.4 & 2\,295 & 4.6 & 4.5 & 2\,336\\
     73\,728 & 12 & 5 & 0.9 & 246.4 & 428 & 24.4 & 25.0 & 7\,318 & 19.4 & 18.7 & 7\,507\\
     131\,072 & 16 & 5 & 1.6 & 559.9 & 966 & 50.1 & 47.2 & 16\,888 & 42.8 & 37.8 & 17\,170\\
     294\,912 & 24 & 5 & 5.6 & 1\,579.0 & 3\,599 & 125.5 & 123.6 & 43\,792 & 107.8 & 103.0 & 45\,345\\
     524\,288 & 32 & 6 & 13.2 & 6\,815.1 & 14\,383 & 474.0 & 331.5 & 94\,766 & 489.1 & 389.5 & 93\,371\\
     1\,179\,648 & 48 & 6 & 34.7 & 19\,657.0 & 44\,523 & 1\,324.8 & 1\,008.2 & 255\,373 & 1\,540.6 & 979.9 & 259\,687\\
     2\,097\,152 & 64 & 7 & 105.5 & 84\,707.6 & 174\,972 & 7\,622.8 & 6\,252.1 & 455\,584 & 8\,541.0 & 6\,106.9 & 438\,976
    \end{array}
  \end{equation*}

  \caption{Helmholtz boundary integral equation with growing wave number}
  \label{ta:growing}
\end{table}

Table~\ref{ta:growing} contains results for a second experiment with the a
wave number that grows as the mesh is refined.  This is common in practical
applications when the mesh is chosen just fine enough to resolve waves.

The growing wave number makes it significantly harder to satisfy the
admissibility condition (\ref{eq:admissibility_1}) and thereby leads to an
increase in blocks that have to be treated.  The admissibility condition
(\ref{eq:admissibility_2}) implies that we also have to introduce a growing
number of directions as the wave number increases.

%
%
\begin{figure}
  \begin{center}
    \includegraphics[width=0.45\textwidth]{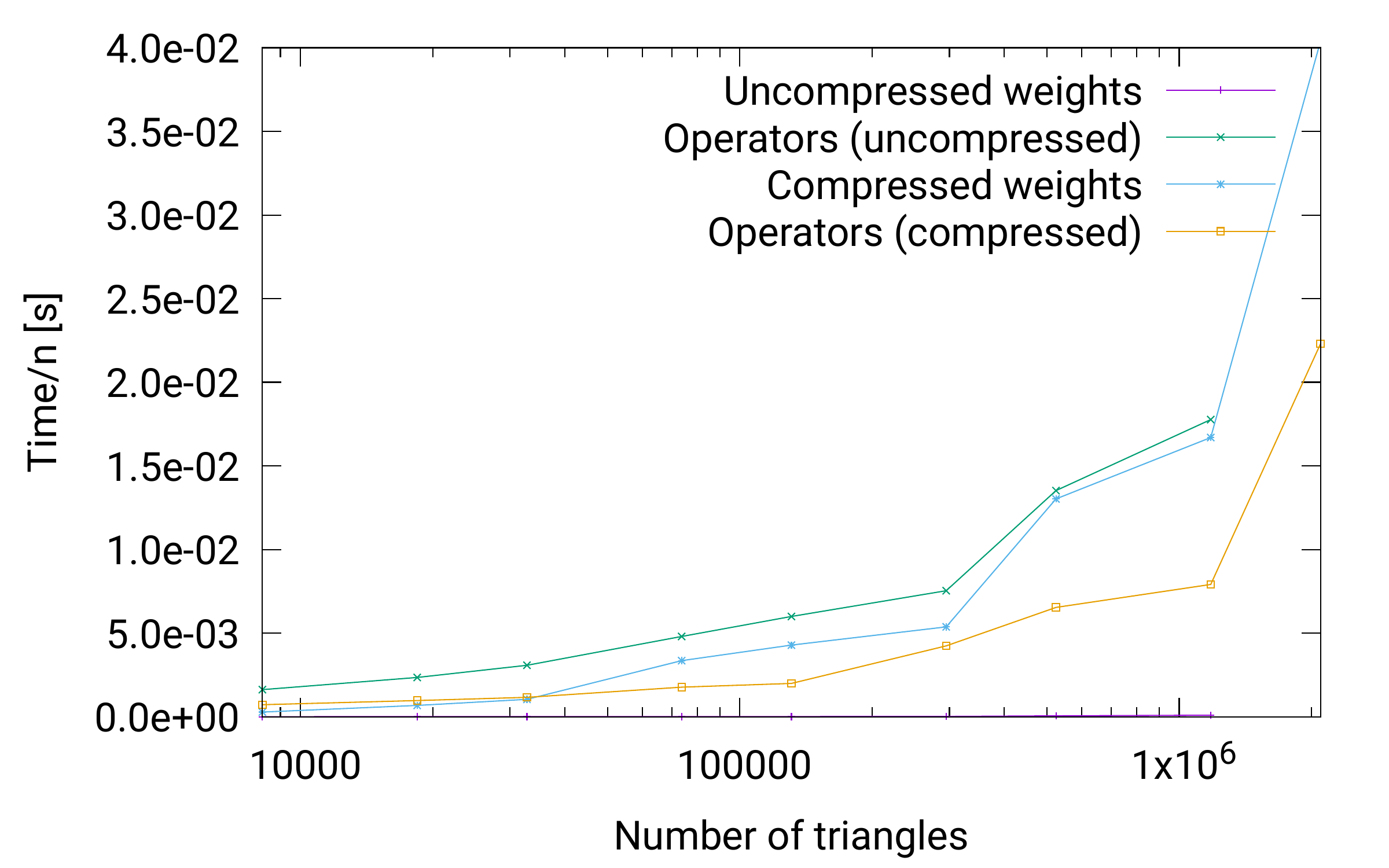}%
    \qquad%
    \includegraphics[width=0.45\textwidth]{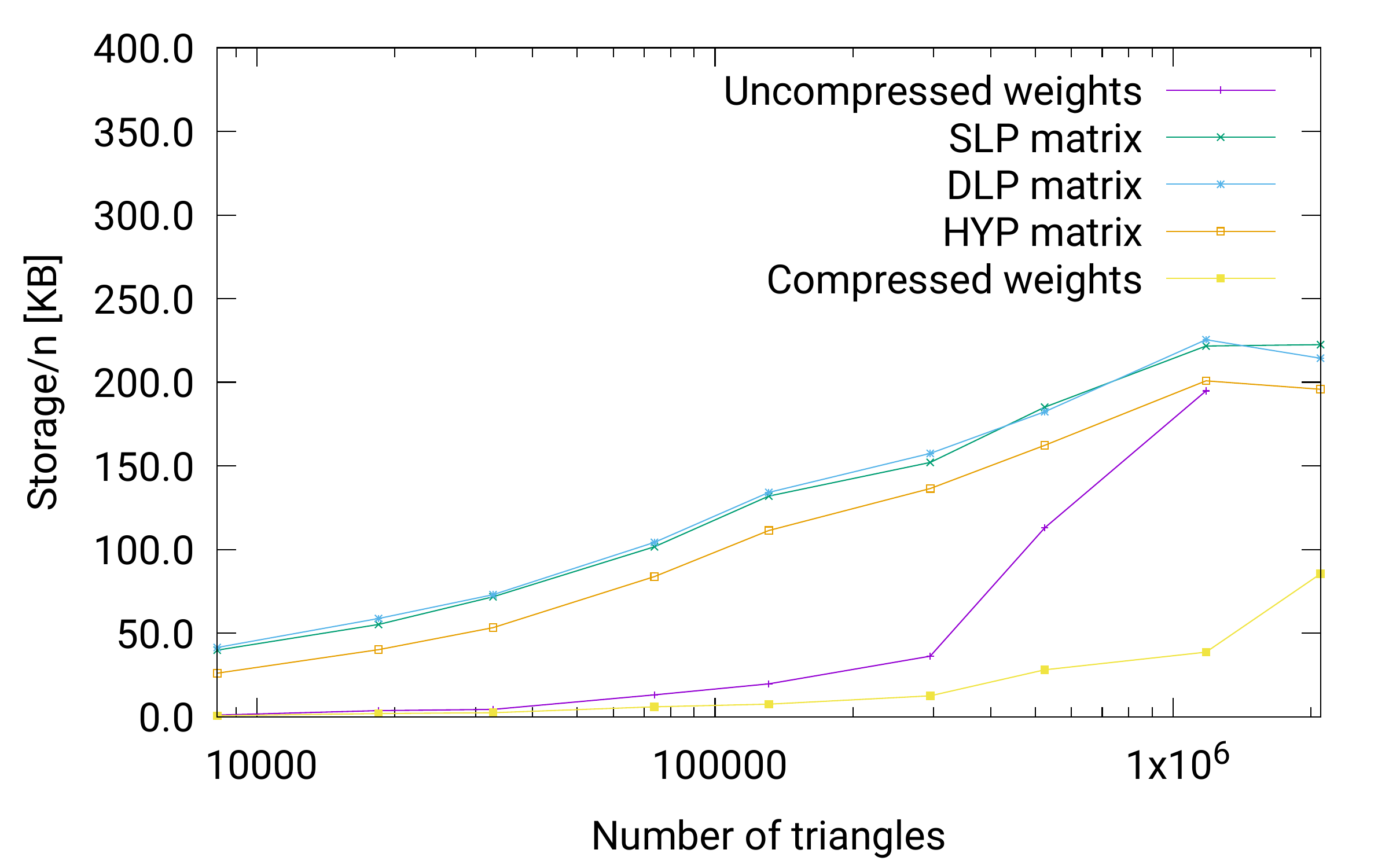}
  \end{center}
 
  \caption{Left: Runtime per degree of freedom.
     Right: Storage requirements per degree of freedom}
  \label{fi:hires}
\end{figure}

The results of our experiment show the expected increase both in computing
time and storage requirements: for $2\,097\,152$ triangles, the setup takes
far longer than in the low-frequency case, since more than thirteen times
as many blocks have to be considered.
In this case, a single coupling matrix requires $4$ MB of storage,
and storing multiple of these matrices during the setup of the matrix
$W_{sc}$ can be expected to exceed the capacity of the available cache
memory, thus considerably slowing down the computation.

Fortunately, the relatively long time required for setting up the
compressed weights is accompanied by a reduction in the time required
for setting up the $\mathcal{DH}^2$-matrices, since the smaller size
of the compressed weights compared to the original weights means that
the construction of the $\mathcal{DH}^2$-matrices has to work with
significantly smaller matrices and can therefore work faster.
This effect is not able to fully compensate the time spent compressing
the weight matrices, but it ensures that the new memory-efficient
algorithm is competitive with the original version.

Figure~\ref{fi:hires} illustrates the algorithms' practical performance:
the left figure shows the runtime per degree of freedom using a
logarithmic scale for the number of triangles.
We can see that the runtimes grow like $\mathcal{O}(n \log n)$, as predicted,
with the slope depending on the order of interpolation.

The right figure shows the storage, again per degree of freedom, and we
can see that the compressed $\mathcal{DH}^2$-matrices for the three operators
show the expected $\mathcal{O}(n \log n)$ behaviour.
The compressed weights grow slowly, while the uncompressed weights appear to
be set to surpass the storage requirements of the matrices they are used
to construct.

\bibliographystyle{plain}
\bibliography{hmatrix}

\begin{thebibliography}{1}

\bibitem{BO10}
S.~{B\"orm}.
\newblock {\em Efficient Numerical Methods for Non-local Operators: {${\mathcal
  H}^2$}-Matrix Compression, Algorithms and Analysis}, volume~14 of {\em EMS
  Tracts in Mathematics}.
\newblock EMS, 2010.

\bibitem{BO17}
S.~{B\"orm}.
\newblock Directional {$\mathcal{H}^2$}-matrix compression for high-frequency
  problems.
\newblock {\em Num. Lin. Alg. Appl.}, 24(6):e2112, 2017.

\bibitem{BO22}
S.~{B\"orm}.
\newblock On iterated interpolation.
\newblock {\em SIAM Num. Anal.}, 60(6):3124--3144, 2022.

\bibitem{BOBO18}
S.~{B\"orm} and C.~{B\"orst}.
\newblock Hybrid matrix compression for high-frequency problems.
\newblock {\em SIAM Matrix Anal. Appl.}, 41((4)):1704--1725, 2020.

\bibitem{BOME15}
S.~{B\"orm} and J.~M. Melenk.
\newblock Approximation of the high-frequency {Helmholtz} kernel by nested
  directional interpolation: error analysis.
\newblock {\em Numer. Math.}, 137(1):1--34, 2017.

\bibitem{BR91}
A.~Brandt.
\newblock Multilevel computations of integral transforms and particle
  interactions with oscillatory kernels.
\newblock {\em Comp. Phys. Comm.}, 65(1--3):24--38, 1991.

\bibitem{HA15}
W.~Hackbusch.
\newblock {\em Hierarchical Matrices: Algorithms and Analysis}.
\newblock Springer, 2015.

\bibitem{MESCDA12}
M.~Messner, M.~Schanz, and E.~Darve.
\newblock Fast directional multilevel summation for oscillatory kernels based
  on {Chebyshev} interpolation.
\newblock {\em J. Comp. Phys.}, 231(4):1175--1196, 2012.

\bibitem{SASC11}
S.~A. Sauter and C.~Schwab.
\newblock {\em Boundary Element Methods}.
\newblock Springer, 2011.

\end{thebibliography}

\end{document}